
\documentclass[11pt,reqno]{amsart}

\usepackage{fullpage}

\usepackage{amsmath}
\usepackage{amssymb}
\usepackage[T1]{fontenc}
\usepackage{mathrsfs}
\usepackage{enumitem}
\usepackage{stmaryrd}
\usepackage{hyperref}
\usepackage{url}
\usepackage{fancyhdr}
\usepackage{mathtools}
\usepackage{comment}
\usepackage{units}
\usepackage{datetime}
\longdate

\usepackage{color,soul}
\usepackage{tikz}
\usepackage{subfigure}

\newtheorem{theorem}{Theorem}[section]

\newtheorem{proposition}[theorem]{Proposition}
\newtheorem{corollary}[theorem]{Corollary}
\newtheorem{definition}{Definition}

\theoremstyle{definition}

\theoremstyle{remark}
\newtheorem{remark}{\textsc{Remark}}
\newtheorem*{claim*}{\textsc{Claim}}

\renewcommand\epsilon{\varepsilon}
\renewcommand\ss{\mathfrak{s}}
\renewcommand\tt{\mathfrak{t}}

\providecommand\llb{\llbracket}
\providecommand\rrb{\rrbracket}

\providecommand\gonc{Gon\v{c}arov{ }}

\providecommand{\dd}{\mathfrak{d}}

\providecommand{\FFc}{\mathcal{F}}

\providecommand{\HHc}{\mathcal{H}}
\providecommand{\KK}{\mathbb{K}}

\providecommand{\NNb}{\mathbf{N}}

\providecommand{\OOc}{\mathcal{O}}
\providecommand{\PPc}{\mathcal{P}}

\providecommand{\TTc}{\mathcal{T}}
\providecommand{\WWc}{\mathcal{W}}

\providecommand{\ZZc}{\mathcal{Z}}
\renewcommand{\tt}{t}

\providecommand\llb{\llbracket}
\providecommand\rrb{\rrbracket}

\providecommand{\imag}{{\rm Im}}

\providecommand{\uimag}{\underline{\imag}}

\newcommand\iter[2]{#1^{#2}}



\newcommand{\fixed}[2][1]{%
  \begingroup
  \spaceskip=#1\fontdimen2\font minus \fontdimen4\font
  \xspaceskip=0pt\relax
  #2%
  \endgroup
}

\makeatletter
\def\moverlay{\mathpalette\mov@rlay}
\def\mov@rlay#1#2{\leavevmode\vtop{%
   \baselineskip\z@skip \lineskiplimit-\maxdimen
   \ialign{\hfil$\m@th#1##$\hfil\cr#2\crcr}}}
\newcommand{\charfusion}[3][\mathord]{
    #1{\ifx#1\mathop\vphantom{#2}\fi
        \mathpalette\mov@rlay{#2\cr#3}
      }
    \ifx#1\mathop\expandafter\displaylimits\fi}
\makeatother

\newcommand{\cupdot}{\charfusion[\mathbin]{\cup}{\cdot}}
\newcommand{\bigcupdot}{\charfusion[\mathop]{\bigcup}{\cdot}}

\hypersetup{
    pdftitle={Generalized Goncarov polynomials},
    pdfauthor={Lorentz, Tringali, and Yan},
    pdfmenubar=false,
    pdffitwindow=true,
    pdfstartview=FitH,
    colorlinks=true,
    linkcolor=blue,
    citecolor=green,
    urlcolor=cyan
}

\hyphenation{
  ei-gen-value ei-gen-values ei-gen-prob-blem ei-gen-prob-blems ei-gen-space
  ei-gen-spaces e-ven-tu-al-ly in-e-qual-i-ty ab-so-lute-ly ex-ten-sion lin-e-ar
  ses-qui-lin-e-ar con-cen-trat-ed in-jec-ti-ve e-quiv-a-lence sub-space
  ex-trac-tion ul-ti-mate-ly func-tions e-quiv-a-lent ho-mo-mor-phism
  sem-i-val-u-at-ed sem-i-val-u-a-tion val-u-at-ed val-u-a-tion
  ul-tra-sem-i-val-u-at-ed ul-tra-sem-i-val-u-a-tion sys-tem-at-i-cal-ly
  el-e-men-ta-ry ir-ra-tion-al per-mu-ta-tion sem-i-norm sem-i-norm-ed norm-ed
  e-val-u-a-tion pol-y-no-mi-als
}

\begin{document}
\title{Generalized Gon\v{c}arov polynomials}

\author{Rudolph Lorentz}
\address{Department of Mathematics, Texas A\&M University at Qatar \\ PO Box 23874 Doha, Qatar}
\email{rudolph.lorentz@qatar.tamu.edu}
\urladdr{http://people.qatar.tamu.edu/rudolph.lorentz/} 

\author{Salvatore Tringali}
\address{Department of Mathematics, Texas A\&M University at Qatar \\ PO Box 23874 Doha, Qatar}
\email{salvo.tringali@gmail.com}
\urladdr{http://www.math.polytechnique.fr/~tringali/}

\author{Catherine H. Yan}
\address{Department of Mathematics, Texas A\&M University \\ 77845-3368 College Station (TX), US}
\email{cyan@math.tamu.edu}
\urladdr{http://www.math.tamu.edu/~cyan/}


\subjclass[2010]{Primary 05A10, 41A05. Secondary 05A40}
%
%
\keywords{delta operators, polynomials of binomial type,
 \gonc polynomials, order statistics}

\begin{abstract}
We introduce the sequence of generalized Gon\v{c}arov polynomials, which is  a basis for the solutions to
the Gon\v{c}arov interpolation problem with respect to a delta operator.  Explicitly,
a generalized Gon\v{c}arov basis is  a sequence $(t_n(x))_{n \ge 0}$ of polynomials defined by the biorthogonality
relation $\varepsilon_{z_i}(  \mathfrak d^{i}(t_n(x))) = n! \;\! \delta_{i,n}$ for all $i,n \in \mathbf N$,
where $\mathfrak d$ is a delta operator,  $\mathcal Z = (z_i)_{i \ge 0}$ a sequence of scalars,
and $\varepsilon_{z_i}$ the evaluation at $z_i$.
We present algebraic and analytic  properties of generalized  Gon\v{c}arov polynomials
and   show that such polynomial sequences provide a natural algebraic tool for enumerating
combinatorial structures with a linear constraint on their order statistics.
\end{abstract}

\dedicatory{Dedicated to Ronald Graham on the occasion
     of his $80$th birthday}

\clearpage
\maketitle
\thispagestyle{empty}


\section{Introduction}
\label{sec:intro}

This paper is a work combining three areas: interpolation theory, finite operator calculus, and combinatorial enumeration.
Lying in the center is a sequence of polynomials, the generalized \gonc polynomials, that arose from
the \gonc Interpolation problem in numerical analysis.

The classical \gonc Interpolation problem  is a special case of Hermite-like interpolation.
It asks for  a polynomial $f(x)$ of degree $n$ such that the $i$th derivative of $f(x)$ at a given point $a_i$ has value
$b_i$, for $i=0,1, \ldots, n$.
The problem was introduced by \gonc \cite{Gonc30, Goncarov} and Whittaker \cite{Whittaker}, and the solution is obtained by
taking linear combinations of the  (classical) \gonc polynomials, or
the Abel-\gonc polynomials, which have been studied extensively by analysts, due to their considerable
 significance in the interpolation
theory of smooth and analytic functions, see for instance \cite{Gonc30, Lev, FrSh, Has} and references therein.

Surprisingly, \gonc polynomials also play an important role in combinatorics due to their close relations to
parking functions,
A parking function is a sequence $(a_1, a_2, \ldots, a_n)$ of positive integers
such that for every $i=1, 2, \ldots, n$, there are at least $i$ terms that are less than or equal to $i$.
Parking functions are one of the most fundamental objects in combinatorics and are related to many different
structures, for example, labeled trees and graphs, linear probing in computer algorithms, hyperplane arrangements,
non-crossing partitions, and diagonal harmonics and representation theory, to name a few.   See, for example, \cite{Yan14} for
a comprehensive survey on parking functions.
It is shown by Kung and Yan \cite{KuYan} that \gonc polynomials are the natural basis of polynomials for
working with parking functions, and the enumeration  of parking functions and their generalizations can be obtained  using \gonc polynomials.
Khare, Lorentz and Yan \cite{KLY14} investigated  a multivariate \gonc interpolation problem  and defined  sequences of
multivariate \gonc polynomials,
which are solutions to the interpolation problem and   enumerate $k$-tuples of
integer sequences whose order statistics are bounded by certain weight function along lattice paths in
$\NNb^k$.

In \cite{RoKaOd73}, Rota, Kahaner and Odlyzko introduced a unified theory of special polynomials by exploiting to the
hilt the duality between $x$ and $d/dx$. The main technique is  a rigorous version of  symbolic calculus, also called
\emph{finite operator calculus}, since it has  an  emphasis on operator methods.
This algebraic theory  is particularly useful in dealing with
polynomial sequences of binomial type, which occur in a large variety of
combinatorial problems when one wants to enumerate objects that can be pieced together out of small, disjoint objects.
Each polynomial sequence of binomial type can be characterized by a linear operator called
\emph{delta operator}, which possesses many properties of the differential operator.  A few basic principles of delta operators
lead to a series of expansion and isomorphism theorems on families of special polynomials,
which in turn lead to new  identities and solutions to combinatorial problems.

Inspired by the rich theory on delta operators,
we  extend the \gonc interpolation problem by replacing the differential operator with a delta operator and
consider the following interpolation.
\begin{quote}
{\bf Generalized Gon\v{c}arov Interpolation.} Given
two sequences $z_0, z_1, \ldots, z_{n}$ and $b_0, b_1, \ldots, b_{n}$ of real or complex numbers and a delta operator $\dd$,
find a (complex) polynomial $p(x)$
of degree $n$ such that
\begin{eqnarray}\label{eq:goncarov-interpolation}
  \varepsilon_{z_i} \iter{\dd}{i} (p(x)) = b_i \qquad \text{ for each } i=0, 1, \ldots, n.
\end{eqnarray}
\end{quote}
The solution of this problem rises the generalized \gonc polynomial.
When $\dd=D$, we recover  the classical \gonc polynomials.
Generalized \gonc polynomials enjoy many nice algebraic properties and
carry   a combinatorial  interpretation  that combines the ideas of   binomial enumeration and  order statistics. Roughly speaking,
if each combinatorial object
is associated with  a sequence of numbers and we rearrange those numbers in
non-decreasing order, then the generalized \gonc polynomial counts those objects of which  the non-decreasing rearrangements are
bounded by a predetermined sequence.
Such structures give  a new generalization  of the classical parking functions.

The main objective of this paper is to present the algebraic and combinatorial properties of generalized
\gonc polynomials.
The paper is organized as follows. In Section 2 we recall the basic theory of delta operators, polynomial sequences of binomial type, and
polynomial sequences biorthogonal to a sequence of linear operators. Using this theory we introduce the sequence of generalized \gonc
polynomials $t_n(x; \dd,  \ZZc)$ associated with a delta operator $\dd$ and a grid $\ZZc$.  In the subsequent Sections 3-5, we discuss
the algebraic properties, present  a combinatorial formula for $t_n(x; \dd, \ZZc)$,
and describe the combinatorial interpretation
by counting  reluctant functions, a kind of combinatorial structures arising from the study of binomial enumeration, with
 constraints on the order statistics.
Many examples are given in Section 6. We finish the paper with some further remarks in the
last section.

\section{Delta Operators and Generalized \texorpdfstring{Gon\v{c}arov}{Goncarov} polynomials}
\label{sec:generalize_goncarov_polys}

\subsection{Delta operator and basic polynomials } \

We start by recalling the basic theory  of delta operators and their associated sequence of basic polynomials,
as developed by Mullin and Rota  \cite{RoKaOd73}.

Consider the vector space $\KK[x]$ of all polynomials in the variable $x$ over a field
$\KK$ of  characteristic zero.  For each $a \in \KK$,
let $E_a$ denote the shift operator $\KK[x] \to \KK[x]: f(x) \mapsto f(x+a)$.
A linear operator $\ss: \KK[x] \to \KK[x]$ is called \textit{shift-invariant} if
$\ss E_a = E_a \fixed[0.25]{ \text{ }} \ss$ for all $a \in \KK$.
\begin{definition}
 A  \emph{delta operator} $\dd$ is a  shift-invariant  operator satisfying $\dd(x) =a$ for
some nonzero constant $a$.
\end{definition}
Delta operators possess many of the properties of the differentiation operator $D$. For
example, $\deg(\dd(f))=\deg(f)-1$ for any $f \in \KK[x]$ and $\dd(a)=0$ for every constant $a$.

We say that a shift-invariant operator $\ss$ is \emph{invertible} if  $\ss(1) \neq 0$. Note that
delta operators are not invertible.

\begin{definition}
 Let $\dd$ be a delta operator. A polynomial sequence $(p_n(x))_{n \ge 0}$ is called the \emph{sequence of basic polynomials},
 or the \emph{basic sequence},
of $\dd$ if
\begin{enumerate}[label={\rm (\arabic{*})}]
 \item $p_0(x)=1$;
 \item $p_n(0)=0$ whenever $n \geq 1$;
 \item $\dd (p_n(x)) = np_{n-1}(x)$.
\end{enumerate}
\end{definition}

Every delta operator has a unique sequence of basic polynomials, which is a sequence of binomial type, i.e., satisfies
\begin{eqnarray} \label{eq:binomial}
 p_n(x+y) =\sum_{k \geq 0} \binom{n}{k} p_k(x) p_{n-k}(x) \qquad \text{for all } n.
\end{eqnarray}
Conversely, every sequence of polynomials of binomial type is the basic sequence for some delta operator.

Let $\ss$ be a shift-invariant operator, and $\dd$ a delta operator with basic sequence $p_n(x)$. Then $\ss$ can be expanded
as a formal power series of $\dd$, as
\begin{eqnarray} \label{eq:delta-expansion}
 \ss = \sum_{k \geq 0} \frac{a_k}{k!} \dd^k
\end{eqnarray}
with $a_k = \varepsilon_0( \ss(p_k(x))$. We will say that the formal power series
$f(t)=  \sum_{k \geq 0} \frac{ a_k}{k!} t^k$   is the $\dd$-indicator of $\ss$.
In fact, there exists an isomorphism from the ring $\KK \llb t \rrb$ of formal power series in the
variable $t$ over $\KK$ onto the ring
$\Sigma$ of shift-invariant operators, which carries
\begin{eqnarray}\label{eq:delta-FPS}
 f(t) = \sum_{k \geq 0} \frac{a_k}{k!} t^k \qquad \text{into} \qquad \sum_{k \geq 0}  \frac{a_k}{k!} \dd^k.
\end{eqnarray}
Under this isomorphism, a shift-invariant operator $\ss$ is invertible if and only if its $\dd$-indicator
$f(t)$ satisfies
$f(0)\neq 0$,  and $\ss$ is a delta operator if and only if $f(0)=0$ and $f'(0)\neq 0$.

Another result we will need is the generating function for the sequence of basic polynomials $\{p_n(x)\}$
associated to a delta
operator $\dd$. Let $q(t)$ be the $D$-indicator of $\dd$, i.e., $q(t)$ is a formal power series
satisfying $\dd=q(D)$. Let $q^{-1}(t)$ be the compositional inverse of $q(t)$.
Then
\begin{eqnarray} \label{eq:EGF}
 \sum_{n \geq 0} \frac{p_n(x)}{n!} t^n = e^{x q^{-1}(t)}.
\end{eqnarray}

\subsection{Biorthogonal sequences} \

Generalized  \gonc polynomials are defined by a biorthogonality condition posted in the \gonc  interpolation
problem. Many  properties of these polynomials follow from a general theory of  \emph{sequences of polynomials biorthogonal
to a sequence of linear functionals}. The idea behind this theory is well-known (for examples, see \cite{BB58, Davis63}),
and an explicit description for the differential operator $D$  is given in \cite[section 2]{KuYan}.
Here we briefly describe this theory with a general delta operator $\dd$. The proofs are analogous to the ones in
\cite{KuYan} and hence omitted.

Let $\dd$ be a delta operator with the basic  sequence    $\PPc = (p_n(x))_{n \ge 0}$.
Let $\Phi_i$, $i=0,1,2,\ldots, $ be  a sequence of shift-invariant operators  of the form
$\sum_{j \ge 0} a_j^{(i)} \iter{\dd}{i+j}$, where $(a_j^{(i)}) \in \KK $
and  $a_0^{(i)} \ne 0$.  Then we have:
\begin{theorem} \label{thm:biortho}
\begin{enumerate}[label={\rm (\arabic{*})},leftmargin=0cm,itemindent=.5cm,labelwidth=\itemindent,labelsep=0cm,align=left]
\item\label{thm:biortho(1)} There exists a unique sequence $\FFc = (f_n(x))_{n \ge 0}$ of pol\-y\-no\-mi\-als  such that $f_n(x)$
is of degree $n$
and
$$
\varepsilon_0(\Phi_i(f_n(x))) = n! \delta_{i,n} \qquad  \text{ for all } i,n \in \NNb
$$
In addition, for every $n$ we have
$$
f_n (x)= \frac{n!}{a_0^{(0)} \cdots a_0^{(n)}} \fixed[0.15]{ \text{ }} {\det}_{\KK[x]}(\Lambda^{(n)}),
$$
with $\Lambda^{(n)}$ the $(n+1)$-by-$(n+1)$ matrix whose $(i,j)$-entry, for $0 \le i, j \le n$, is given by
$$
\lambda_{i,j}^{(n)} :=
\left\{
\begin{array}{ll}
\! a_{j-i}^{(i)},          & \text{if } 0 \leq  i \le \min(j,n-1) \\
\! \frac{1}{j!} \fixed[0.15]{ \text{ }} p_j(x), & \text{if } i = n \\
\! 0,                        & \text{otherwise.}
\end{array}
\right.\!
$$
\end{enumerate}
\begin{enumerate}[label={\rm (\arabic{*})},resume]
\item\label{thm:biortho(2)} The sequence $\FFc$ defined above forms a basis of $\KK[x]$.  For every $f(x) \in \KK[x]$ it holds
    \begin{equation}
    \label{equ:expansion_formula_for_biorthogonal_families}
    f(x) = \sum_{n \ge 0} \frac{\varepsilon_0(\Phi_i(f))}{n\fixed[0.15]{ \text{ }} !} \fixed[0.15]{ \text{ }} f_n(x)
      = \sum_{n = 0}^{\deg(f)} \frac{\varepsilon_0(\Phi_i(f))}{n\fixed[0.15]{ \text{ }} !} \fixed[0.15]{ \text{ }} f_n(x).
    \end{equation}
    \end{enumerate}
\end{theorem}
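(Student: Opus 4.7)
The plan is to reduce biorthogonality to a triangular linear system in the basic sequence $(p_k)_{k\ge 0}$ of $\dd$, and then verify the determinantal formula via a ``two identical rows'' argument. The cornerstone is the calculation that, since $\dd^m(p_k) = \frac{k!}{(k-m)!}p_{k-m}$ for $m \le k$ (and zero otherwise) and $p_j(0) = \delta_{j,0}$, one has $\varepsilon_0(\dd^m(p_k)) = k!\, \delta_{m,k}$, and hence for all $i,k \in \NNb$,
\[
\varepsilon_0(\Phi_i(p_k(x))) \;=\; \begin{cases} k!\, a_{k-i}^{(i)}, & k \ge i, \\ 0, & k < i. \end{cases}
\]
Note also that $\Phi_i$ lowers degrees by at least $i$, so $\varepsilon_0(\Phi_i(q)) = 0$ automatically whenever $\deg q < i$; this handles the required biorthogonality in the range $i > n$ for free.

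For part (1), I would write $f_n(x) = \sum_{k=0}^n b_k^{(n)} p_k(x)$ with $b_n^{(n)} \ne 0$, so that the biorthogonality conditions for $0 \le i \le n$ become the upper-triangular linear system
\[
\sum_{k=i}^n k!\, a_{k-i}^{(i)}\, b_k^{(n)} \;=\; n!\, \delta_{i,n},
\]
whose diagonal entries $i!\, a_0^{(i)}$ are nonzero. Back substitution starting from $b_n^{(n)} = 1/a_0^{(n)}$ then delivers existence and uniqueness of $f_n$. To check the explicit formula, I apply $\varepsilon_0 \circ \Phi_i$ to $\det\Lambda^{(n)}$: because all non-constant entries of $\Lambda^{(n)}$ sit in row $n$, cofactor expansion along that row and the calculation above yield $\varepsilon_0(\Phi_i(\det\Lambda^{(n)})) = \det \widetilde{\Lambda}^{(n)}_i$, where $\widetilde{\Lambda}^{(n)}_i$ is obtained from $\Lambda^{(n)}$ by replacing row $n$ with the row whose $j$th entry equals $a_{j-i}^{(i)}$ for $j \ge i$ and $0$ otherwise. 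The key step, and the one that deserves care, is comparing this replaced row with row $i$ of $\Lambda^{(n)}$: for $i \le n-1$ they coincide entry by entry (directly from the definition of $\lambda_{i,j}^{(n)}$), forcing $\det\widetilde{\Lambda}^{(n)}_i = 0$ whenever $i < n$. For $i = n$ the replaced row has a single nonzero entry $a_0^{(n)}$ at position $(n,n)$, so $\widetilde{\Lambda}^{(n)}_n$ is upper triangular with diagonal $a_0^{(0)}, \ldots, a_0^{(n)}$ and $\det\widetilde{\Lambda}^{(n)}_n = \prod_{k=0}^n a_0^{(k)}$. Rescaling by $n!/(a_0^{(0)} \cdots a_0^{(n)})$ recovers the desired biorthogonality; a further last-row expansion shows the coefficient of $p_n(x)/n!$ in $\det\Lambda^{(n)}$ is $\prod_{k=0}^{n-1} a_0^{(k)} \ne 0$, so the formula defines a polynomial of degree exactly $n$, which by uniqueness must be $f_n$.

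Part (2) then follows immediately. The family $\FFc$ contains one polynomial of each degree and is therefore a $\KK$-basis of $\KK[x]$; writing $f = \sum_n c_n f_n$ as a finite sum and applying $\varepsilon_0 \circ \Phi_i$ to both sides yields $c_i = \varepsilon_0(\Phi_i(f))/i!$ by biorthogonality, which is the claimed expansion formula (the truncation to $n \le \deg f$ is automatic from the degree-lowering observation noted at the start).
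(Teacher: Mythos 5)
Your proposal is correct. The paper itself omits the proof of this theorem, deferring to the analogous argument for $\dd = D$ in Kung--Yan; your argument is exactly that argument transported to the basic sequence $(p_k)_{k\ge 0}$ of $\dd$ --- the triangular system with diagonal entries $i!\,a_0^{(i)}$ for existence and uniqueness, and the repeated-row/upper-triangular evaluation of $\det\Lambda^{(n)}$ for the explicit formula --- so it supplies precisely the omitted details (and, incidentally, makes clear that the index in the displayed expansion formula should read $\varepsilon_0(\Phi_n(f))$ rather than $\varepsilon_0(\Phi_i(f))$).
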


\subsection{Generalized \texorpdfstring{Gon\v{c}arov}{Goncarov} polynomials} \

Let $\ZZc = (z_i)_{i \ge 0}$ be a fixed sequence with values in $\KK$; in this context, we may refer to $\ZZc$ as
an (interpolation) $\KK$-grid (or simply  a grid),
and to the scalars $z_i \in \KK$ as (interpolation) nodes. For every $a \in \KK$, $E_a$ is an invertible shift-invariant
operator and  hence can be expressed as  $E_a=f_a(\dd)$ where $f_a(t) \in \KK \llb t \rrb$ with $f_a(0)\neq 0$.
It follows from Theorem \ref{thm:biortho} that there
is a unique sequence of polynomials  $\TTc = (\tt_n(x))_{n \ge 0} $  biorthogonal
to the sequence of operators $\{ \Phi_i=E_{z_i} \iter{\dd}{i}: i \ge 0 \}$. More precisely,
$t_n(x)$ satisfies
\begin{eqnarray} \label{equ:biortho}
 \epsilon_{z_i} \iter{\dd}{i}(t_n(x))  = n! \fixed[0.15]{ \text{ }} \delta_{i,n}.
\end{eqnarray}

\begin{definition}
We call the polynomial sequence $\TTc= (\tt_n(x))_{n \ge 0} $ determined by Eq.~\eqref{equ:biortho}
  the \emph{sequence of generalized \gonc polynomials}, or the \emph{generalized \gonc basis}, associated with the pair $(\dd,  \ZZc)$, and
  $t_n(x)$ the $n$-th generalized \gonc polynomial relative to the same pair.
  Accordingly, \eqref{equ:biortho} will be referred to as the biorthogonality property of gen\-er\-al\-ized \gonc bases.
\end{definition}

By Theorem \ref{thm:biortho}(2), for any   polynomial $f(x) \in \KK[x]$,   we have the expansion formula
\begin{eqnarray} \label{equ:expansion_formula}
f(x) = \sum_{i \ge 0} \frac{\epsilon_{z_i} \iter{\dd}{i}(f)} {i\fixed[0.15]{ \text{ }} !}  \tt_i (x)
= \sum_{i = 0}^{\deg(f)} \frac{\epsilon_{z_i} \iter{\dd}{i}(f)}{i\fixed[0.15]{ \text{ }}!} \fixed[0.15]{ \text{ }} \tt_i (x).
\end{eqnarray}
In particular, the solution of the generalized \gonc interpolation \eqref{eq:goncarov-interpolation} described in Section 1 is
given by the polynomial
\[
 p(x) = \sum_{i=0}^n \frac{ b_i}{i!} t_i(x).
\]
%
In some cases, to emphasize the dependence of $\TTc$ on $\dd$ and $\ZZc$, we  write $t_n(x)$  as  $\tt_n(x\fixed[0.25]{ \text{ }}; \dd, \ZZc)$.
 When the delta operator is the differentiation $D$, we get the classical \gonc polynomials, which were studied in \cite{KuYan}.
We reserve the symbols $g_n(x)$ and $g_n(x\fixed[0.25]{ \text{ }};\ZZc)$, respectively, for the classical \gonc polynomials
  to avoid confusion when we compare the results of generalized \gonc polynomials with those of the classical case.
Another special case that has been considered  is the
difference \gonc polynomials \cite{KSY}, for
which $\dd$ is the backward  difference operator $\Delta_{0,-1}=I-E_{-1}$.  The present paper is the first one
describing the theory of biorthogonal polynomials with an arbitrary delta operator, and hence connecting the
theory of interpolation to finite operator calculus. In the next sections we will describe the
algebraic properties of the generalized  \gonc polynomials, and reveal a deeper connection between binomial enumerations
and structure of order statistics.

We remark that every generalized \gonc basis is a sequence of biorthogonal polynomials, but the converse is not true in general.
%
%
\section{Algebraic properties of generalized \texorpdfstring{Gon\v{c}arov}{Goncarov} polynomials}
\label{sec:algebraic_properties_of_GGPs}

 Let $\ZZc = (z_i)_{i \ge 0}$ be a fixed $\KK$-grid. We
 denote by $\ZZc^{(j)}$ the grid whose $i$-th term is the $(i+j)$-th
term of $\ZZc$, and call $\ZZc^{(j)}$ the $j$-th shift of the grid $\ZZc$.
The zero grid, herein denoted by $\OOc$, is the one with $z_i=0$ for all $i$.
\begin{proposition}
\label{prop:trivialities_on_generalized_Goncarov_polynomials}
If $\TTc = (\tt_n(x))_{n \ge 0}$ is the generalized \gonc basis associated with the pair $(\dd, \ZZc)$,
then $\tt_0(x) = 1$ and $\tt_n(z_0) = 0$ for all $n \geq 1$.
\end{proposition}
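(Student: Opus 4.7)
The plan is to derive both claims directly from the defining biorthogonality relation \eqref{equ:biortho} by specializing indices, with no auxiliary machinery required.

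First I would address $t_0(x) = 1$. By construction each $t_n$ has degree $n$, so $t_0$ is a constant polynomial, say $t_0(x) = c$. Setting $i = n = 0$ in \eqref{equ:biortho} gives
\[
c = \varepsilon_{z_0}(\mathfrak{d}^{0}(t_0(x))) = \varepsilon_{z_0}(t_0(x)) = 0!\,\delta_{0,0} = 1,
\]
using that $\mathfrak{d}^{0}$ is the identity operator on $\mathbb{K}[x]$. Hence $t_0(x) = 1$.

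Next I would handle $t_n(z_0) = 0$ for $n \ge 1$. Specialize \eqref{equ:biortho} by taking $i = 0$ and any $n \ge 1$; then
\[
t_n(z_0) = \varepsilon_{z_0}(t_n(x)) = \varepsilon_{z_0}(\mathfrak{d}^{0}(t_n(x))) = n!\,\delta_{0,n} = 0,
\]
since $n \ne 0$. This completes the proof.

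Since both statements fall out immediately from the definition, there is no real obstacle here; the proposition serves mainly to record the normalization that will be used repeatedly in the subsequent algebraic development (for instance when comparing generalized Gončarov polynomials with the basic sequence of $\mathfrak{d}$, which shares the same two properties).
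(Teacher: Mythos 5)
Your proof is correct and follows exactly the paper's route: the paper likewise derives both claims by specializing the biorthogonality relation \eqref{equ:biortho} at $i=0$ (the paper just states this in one line, while you spell out the two cases $n=0$ and $n\ge 1$). Nothing to add.
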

\begin{proof}
This follows from the biorthogonality property \eqref{equ:biortho} with $i=0$.
\end{proof}
The next proposition is a generalization of the differential relations satisfied by the classical  \gonc polynomials \cite[p.~23]{KuYan}.
\begin{proposition}
\label{prop:differential_formula}
Let $\TTc = (t_n(x))_{n \ge 0}$ be the generalized \gonc basis associated with the pair $(\dd, \ZZc)$.
 Fix $j \in \NNb$ and define  for each $n \in \NNb$ the polynomial   $t_n^{(j)}(x)$ by letting
\begin{equation}
\label{equ:differentied_goncarov_bases}
t_n^{(j)}(x):= \frac{1}{(n+j)_{(j)}} \fixed[0.15]{ \text{ }} \iter{\dd}{j} \fixed[0.15]{ \text{ }} t_{n+j}(x),
\end{equation}
where $n_{(j)}=n(n-1)\cdots (n-j+1)$ is the $j$-th lower factorial function.
Then, $(t_n^{(j)}(x))_{n \ge 0}$ is the generalized \gonc basis associated with the pair $(\dd, \ZZc^{(j)})$. In particular, we have
\begin{eqnarray} \label{eq:differential}
 \iter{\dd}{j} t_n(x) = n_{(j)} t_{n-j}^{(j)}(x).
\end{eqnarray}

\end{proposition}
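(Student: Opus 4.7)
The plan is to verify directly that the proposed sequence $(t_n^{(j)}(x))_{n\ge 0}$ satisfies the defining biorthogonality relation \eqref{equ:biortho} for the pair $(\dd, \ZZc^{(j)})$, and then to invoke the uniqueness part of Theorem \ref{thm:biortho}\ref{thm:biortho(1)} (applied, as in the definition of generalized \gonc polynomials, to the sequence of operators $\Phi_i = E_{z_{i+j}} \dd^i$) to conclude that this sequence \emph{is} the generalized \gonc basis associated with $(\dd, \ZZc^{(j)})$.

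First I would check the degree condition. Since $t_{n+j}(x)$ has degree $n+j$ and each application of $\dd$ drops the degree by exactly one, the polynomial $\dd^j t_{n+j}(x)$ has degree $n$, and therefore so does $t_n^{(j)}(x)$. Next, for the biorthogonality, the $i$-th node of $\ZZc^{(j)}$ is $z_{i+j}$, so I need to show
\[
\varepsilon_{z_{i+j}}\bigl(\dd^i(t_n^{(j)}(x))\bigr) = n!\,\delta_{i,n}
\qquad \text{for all } i,n \in \NNb.
\]
Substituting the definition \eqref{equ:differentied_goncarov_bases} and using $\dd^i \dd^j = \dd^{i+j}$, the left-hand side equals
\[
\frac{1}{(n+j)_{(j)}}\,\varepsilon_{z_{i+j}}\bigl(\dd^{i+j}(t_{n+j}(x))\bigr)
= \frac{(n+j)!}{(n+j)_{(j)}}\,\delta_{i+j,\,n+j}
= n!\,\delta_{i,n},
\]
where the middle equality is the biorthogonality of $\TTc$ with respect to $(\dd,\ZZc)$ applied with indices $i+j$ and $n+j$, and the last step uses $(n+j)_{(j)} = (n+j)!/n!$ together with the fact that $\delta_{i+j,\,n+j} = \delta_{i,n}$. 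Uniqueness in Theorem \ref{thm:biortho} then identifies $(t_n^{(j)})_{n\ge 0}$ with the generalized \gonc basis for $(\dd, \ZZc^{(j)})$.

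Finally, \eqref{eq:differential} is obtained by rewriting the definition \eqref{equ:differentied_goncarov_bases} with $n$ replaced by $n-j$ (so that $n+j$ becomes $n$), which gives $\dd^j t_n(x) = n_{(j)}\,t_{n-j}^{(j)}(x)$ whenever $n \ge j$, and is trivial ($\dd^j t_n = 0$, and $n_{(j)} = 0$) when $n < j$. I do not foresee any serious obstacle: the proof is essentially bookkeeping with indices, and the only thing to be careful about is that the shifted grid $\ZZc^{(j)}$ forces the evaluation functionals in the biorthogonality to be $\varepsilon_{z_{i+j}}$ rather than $\varepsilon_{z_i}$, which is precisely what makes the indices in the computation above line up.
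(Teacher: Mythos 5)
Your proof is correct and follows essentially the same route as the paper's: check the degree, verify the biorthogonality relation for the shifted grid via the index substitution $i \mapsto i+j$, $n \mapsto n+j$ together with $(n+j)_{(j)}\,n! = (n+j)!$, and conclude by uniqueness. The final re-indexing to obtain \eqref{eq:differential} also matches the paper.
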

\begin{proof}
First, notice that $t_n^{(j)}(x)$ is a polynomial  of degree $n$, since
  $t_{n+j}(x)$ is a polynomial of degree $n+j$
and delta operators reduce degrees by one.

Next, pick $i,n \in \NNb$ with $i \le n$, and let $z_i^{(j)}$ denote the $i$-th node of the grid $\ZZc^{(j)}$.
We just need to verify that
$$
\epsilon_{z_i^{(j)}}( \iter{\dd}{i} (t_n^{(j)}(x))) = n! \fixed[0.15]{ \text{ }} \delta_{i,n}.
$$
Since $z_i^{(j)} = z_{i+j}$ and $\delta_{i,n} = \delta_{i+j,n+j}$, the above equation is equivalent to
$$
\frac{1}{(n+j)_{(j)}} \fixed[0.15]{ \text{ }} \epsilon_{z_{i+j}}(\iter{\dd}{i+j} (t_{n+j}(x)))
= n! \fixed[0.15]{ \text{ }} \delta_{i+j,n+j},
$$
which follows from the equations  $(n+j)_{(j)} n! = (n+j)!$ and
$\epsilon_{z_l}(\iter{\dd}{l} t_k(x)) = k! \fixed[0.15]{ \text{ }} \delta_{l,k}$ for all $l,k \in \NNb$.
The last statement is obtained by replacing $n+j$ by $n$ in \eqref{equ:differentied_goncarov_bases}.
\end{proof}
Following Proposition \ref{prop:differential_formula} we see that
sequences of polynomials of binomial type are a special
case of generalized \gonc polynomials.
%
\begin{proposition}
\label{prop:basic_sequence_only_if_generalized_goncarov}
The basic sequence of the delta operator $\dd$ is the generalized \gonc basis associated with the pair $(\dd, \OOc)$.
\end{proposition}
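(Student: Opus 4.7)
The plan is to verify directly that the basic sequence $(p_n(x))_{n\ge 0}$ of $\dd$ satisfies the biorthogonality relation \eqref{equ:biortho} with $z_i = 0$ for every $i$, and then invoke the uniqueness clause of Theorem~\ref{thm:biortho}\ref{thm:biortho(1)} (applied with $\Phi_i = \iter{\dd}{i}$) to conclude that this basic sequence must be the generalized Gon\v{c}arov basis associated with $(\dd, \OOc)$.

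The key input is the defining property $\dd(p_n(x)) = n \fixed[0.15]{ \text{ }} p_{n-1}(x)$ of a basic sequence. Iterating this relation $i$ times, I would obtain
\[
\iter{\dd}{i}(p_n(x)) = n_{(i)} \fixed[0.15]{ \text{ }} p_{n-i}(x) \qquad \text{for } 0 \le i \le n,
\]
while $\iter{\dd}{i}(p_n(x)) = 0$ whenever $i > n$ (since delta operators drop degrees by one and eventually annihilate $p_n(x)$). Evaluating at $0$ and splitting into three cases, I would argue: for $i > n$, both sides of \eqref{equ:biortho} vanish; for $i < n$, the value $n_{(i)} \fixed[0.15]{ \text{ }} p_{n-i}(0)$ vanishes because $p_{n-i}(0) = 0$ by property~(2) of a basic sequence (using $n-i \ge 1$); and for $i = n$, the value equals $n_{(n)} \fixed[0.15]{ \text{ }} p_0(0) = n!$ by property~(1). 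This exactly says $\varepsilon_0(\iter{\dd}{i}(p_n(x))) = n! \fixed[0.15]{ \text{ }} \delta_{i,n}$.

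To close the argument I would note that the sequence of operators $\Phi_i := \iter{\dd}{i}$ fits the hypothesis of Theorem~\ref{thm:biortho} with $a_0^{(i)} = 1 \ne 0$, so the biorthogonal sequence it determines is unique; consequently the basic sequence $(p_n(x))_{n\ge 0}$ must coincide with the generalized Gon\v{c}arov basis $\TTc$ attached to $(\dd, \OOc)$.

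There is really no obstacle here: the proof is a direct verification plus an appeal to the uniqueness already established in Theorem~\ref{thm:biortho}. The only subtle point worth stating carefully is why $\iter{\dd}{i}(p_n(x)) = 0$ for $i > n$, and this is immediate from iterating the degree-lowering property $\deg(\dd(f)) = \deg(f)-1$ noted right after the definition of a delta operator.
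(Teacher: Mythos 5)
Your proposal is correct and follows essentially the same route as the paper: iterate $\dd(p_n) = n\, p_{n-1}$ to get $\iter{\dd}{i}(p_n) = n_{(i)}\, p_{n-i}$, evaluate at $0$, and use $p_0 = 1$ and $p_m(0)=0$ for $m \ge 1$ to read off the biorthogonality relation, with uniqueness doing the rest. The only cosmetic difference is that you treat $i>n$ as a separate case via degree-lowering, whereas the paper folds it into $i \ne n$ (where $n_{(i)}=0$ already kills the term); both are fine.
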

\begin{proof}
Let $(p_n(x))_{n \ge 0}$ be the basic sequence of the delta operator $\dd$.
Then iterating the equation $\dd(p_n) = n\fixed[0.15]{ \text{ }} p_{n-1}$ yields $\iter{\dd}{i} (p_n(x))= n_{(i)} p_{n-i}(x)$, which, when
evaluated at $x=0$, is $n!p_0(x)=n!$ if $n=i$, and $n_{(i)}p_{n-i}(0)=0$ if $i \neq n$.
\end{proof}
\begin{corollary}
\label{cor:characterization_of_binomial_sequences}
Let $\PPc= (p_n(x))_{n \ge 0}$ be a  sequence of polynomials with $\deg(p_i)=i$ for all $i$. Then, $\PPc$ is
of binomial type if and only if $\PPc$ is the generalized \gonc basis associated with the pair $(\dd, \OOc)$
for a suitable choice of $\dd$,.
\end{corollary}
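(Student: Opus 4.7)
The plan is to derive both directions directly from Proposition~\ref{prop:basic_sequence_only_if_generalized_goncarov}, together with the uniqueness clause of Theorem~\ref{thm:biortho}\ref{thm:biortho(1)} and the classical fact (recalled just after Definition~2.2 in the excerpt) that a polynomial sequence $(p_n(x))_{n\ge 0}$ is of binomial type if and only if it is the basic sequence of some delta operator. In particular, no new computation should be required.

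For the ``only if'' direction, suppose $\PPc$ is of binomial type. Then $\PPc$ is the basic sequence of a (unique) delta operator $\dd$, and Proposition~\ref{prop:basic_sequence_only_if_generalized_goncarov} immediately identifies this basic sequence with the generalized \gonc basis associated with the pair $(\dd,\OOc)$, so a suitable $\dd$ exists.

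For the ``if'' direction, assume $\PPc$ is the generalized \gonc basis associated with $(\dd,\OOc)$ for some delta operator $\dd$. Let $\QQc = (q_n(x))_{n\ge 0}$ denote the basic sequence of $\dd$. By Proposition~\ref{prop:basic_sequence_only_if_generalized_goncarov}, $\QQc$ is also the generalized \gonc basis associated with $(\dd,\OOc)$. But Theorem~\ref{thm:biortho}\ref{thm:biortho(1)}, applied to the sequence $\Phi_i = \varepsilon_{0}\iter{\dd}{i}$ (i.e., $z_i=0$ for all $i$), guarantees that such a biorthogonal sequence is unique. Hence $\PPc = \QQc$, so $\PPc$ is the basic sequence of a delta operator and therefore of binomial type.

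The argument is thus essentially a one-line combination of two already-established results; the only point that merits verification, and the closest thing to an obstacle, is checking that the hypotheses of Theorem~\ref{thm:biortho} are indeed met in the special case $\ZZc=\OOc$ (i.e.\ that $E_{z_i}=E_0=I$ has $\dd$-indicator $1$, so the relevant coefficients $a_0^{(i)}$ are all nonzero), which is immediate from \eqref{eq:delta-expansion}.
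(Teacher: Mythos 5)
Your proof is correct. The ``only if'' direction is exactly the paper's: binomial type implies basic sequence of some $\dd$ (Theorem 1(b) of \cite{RoKaOd73}), and then Proposition~\ref{prop:basic_sequence_only_if_generalized_goncarov} does the rest. For the converse you take a genuinely different route. The paper verifies directly that a generalized \gonc basis for $(\dd,\OOc)$ satisfies the three defining axioms of a basic sequence: it gets $p_0=1$ and $p_n(0)=0$ from Proposition~\ref{prop:trivialities_on_generalized_Goncarov_polynomials}, and $\dd(p_n)=n\,p_{n-1}$ from Proposition~\ref{prop:differential_formula} together with the observation that $\OOc^{(1)}=\OOc$. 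You instead apply Proposition~\ref{prop:basic_sequence_only_if_generalized_goncarov} a second time to the basic sequence $\QQc$ of $\dd$ and conclude $\PPc=\QQc$ from the uniqueness clause of Theorem~\ref{thm:biortho}\ref{thm:biortho(1)}; your check that the operators $\Phi_i=E_0\iter{\dd}{i}=\iter{\dd}{i}$ satisfy the hypothesis $a_0^{(i)}\ne 0$ is the right thing to verify and is indeed immediate. Your version is slightly more economical in that it avoids Proposition~\ref{prop:differential_formula} entirely and leans only on uniqueness (which is in any case built into the definition of the generalized \gonc basis); the paper's version has the minor virtue of exhibiting concretely why the axioms of a basic sequence hold, which makes the shift-invariance of the zero grid do visible work. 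Both arguments are complete.
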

\begin{proof}
The necessity follows from Proposition \ref{prop:basic_sequence_only_if_generalized_goncarov} and
Theorem 1(b) of  \cite{RoKaOd73}, which states that any sequence of polynomials of binomial type is a basic
sequence for some delta operator.

Conversely, let $\PPc = (p_n(x))_{n \ge 0}$ be  the generalized \gonc basis associated
with the pair $(\dd, \OOc)$. If $(p_n^{(1)}(x))_{n \ge 0}$ denotes the generalized \gonc basis associated with
the pair $(\dd, \OOc^{(1)})$, then by Proposition \ref{prop:differential_formula},
$\dd(p_n) = n \fixed[0.15]{ \text{ }} p_{n-1}^{(1)}$  for all
$n \geq 1$,  which in turn implies
$\dd(p_n) = n \fixed[0.15]{ \text{ }} p_{n-1}$, since $\OOc^{(1)} = \OOc$. This, together with
Proposition \ref{prop:trivialities_on_generalized_Goncarov_polynomials}, concludes the proof.
\end{proof}
%
%
%

Next we investigate the behavior of a generalized \gonc basis with respect to a transformation of
the interpolation grid. Proposition \ref{prop:invariance_under_translation} extends
 the shift-invariance  property  for classical  \gonc polynomials  and
difference \gonc polynomials.


\begin{proposition}
\label{prop:invariance_under_translation}
Let $\WWc = (w_i)_{i \ge 0}$ be a translation of the grid $\ZZc$ by $\xi \in \KK$,  i.e.,
 $w_i = z_i + \xi$ for all $i$.  Assume that $\TTc = (t_n(x))_{n \ge 0}$ and $\HHc = (h_n(x))_{n \ge 0}$ are
the generalized \gonc bases associated with the pairs $(\dd, \ZZc)$ and $(\dd, \WWc)$, respectively.
Then, $h_n(x+\xi) = t_n(x)$ for all $n$.
\end{proposition}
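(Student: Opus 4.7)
The plan is to prove this by the uniqueness part of Theorem \ref{thm:biortho}. Specifically, I will define the shifted sequence $\tilde{h}_n(x) := h_n(x+\xi) = (E_\xi h_n)(x)$ and verify that $(\tilde{h}_n(x))_{n \ge 0}$ satisfies the biorthogonality relation \eqref{equ:biortho} defining the generalized \gonc basis associated with $(\dd, \ZZc)$; then $\tilde{h}_n = t_n$ and the result follows.

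First I would note that $\tilde{h}_n(x)$ is a polynomial of degree $n$, since $h_n$ is. Next, the key computation uses the shift-invariance of $\dd$ (and hence of $\iter{\dd}{i}$ for every $i$): for any $i,n \in \NNb$,
\[
\iter{\dd}{i}(\tilde{h}_n(x)) = \iter{\dd}{i}(E_\xi h_n(x)) = E_\xi \iter{\dd}{i}(h_n(x)) = \bigl[\iter{\dd}{i}(h_n)\bigr](x+\xi).
\]
Evaluating at $x = z_i$ and using $w_i = z_i + \xi$ gives
\[
\varepsilon_{z_i}\bigl(\iter{\dd}{i}(\tilde{h}_n(x))\bigr) = \bigl[\iter{\dd}{i}(h_n)\bigr](z_i+\xi) = \varepsilon_{w_i}\bigl(\iter{\dd}{i}(h_n(x))\bigr) = n! \fixed[0.15]{ \text{ }} \delta_{i,n},
\]
where the last equality is the biorthogonality \eqref{equ:biortho} for $\HHc$ with respect to $(\dd, \WWc)$.

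Thus $(\tilde{h}_n(x))_{n \ge 0}$ satisfies the biorthogonality relation that characterizes the generalized \gonc basis associated with $(\dd, \ZZc)$. By the uniqueness in Theorem \ref{thm:biortho}\ref{thm:biortho(1)}, we conclude $\tilde{h}_n(x) = t_n(x)$, i.e., $h_n(x + \xi) = t_n(x)$ for all $n$. There is no real obstacle here; the only point that requires care is the invocation of shift-invariance, which transfers cleanly from $\dd$ to $\iter{\dd}{i}$ because powers of a shift-invariant operator are themselves shift-invariant.
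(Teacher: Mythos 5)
Your proof is correct and follows essentially the same route as the paper: both verify that $E_\xi h_n$ has degree $n$ and satisfies the biorthogonality relation for $(\dd,\ZZc)$ by commuting $\iter{\dd}{i}$ past $E_\xi$ (shift-invariance) and using $w_i = z_i+\xi$, then invoke uniqueness from Theorem \ref{thm:biortho}. No gaps.
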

\begin{proof}
 Clearly  $h_n(x+\xi)=E_\xi(h_n(x))$ is a polynomial  of degree $n$, so by
the uniqueness of the \gonc basis associated with the pair $(\dd, \ZZc)$, it suffices to prove that
$$
\epsilon_{z_i}(\iter{\dd}{i} \fixed[0.25]{ \text{ }} E_\xi(h_n(x))) = n! \fixed[0.15]{ \text{ }} \delta_{i,n}.
$$
Note  that $\iter{\dd}{i} \fixed[0.25]{ \text{ }} E_\xi = E_\xi \fixed[0.15]{ \text{ }} \iter{\dd}{i}$
because any two shift-invariant operators commute.
Therefore
$$
\epsilon_{z_i}(\iter{\dd}{i}(E_\xi(h_n(x)))) = \epsilon_{z_i} \fixed[0.25]{ \text{ }} E_\xi(\iter{\dd}{i}(h_n(x))) =
\epsilon_{z_i+\xi}(\iter{\dd}{i}(h_n(x))) = \epsilon_{w_i}(\iter{\dd}{i}(h_n(x))) = n! \delta_{i,n}.
$$
\end{proof}

\begin{proposition}
\label{prop:shift_by_AP}
Fix $\xi \in \KK$ and let $\HHc = (h_n(x))_{n \ge 0}$ be the generalized \gonc basis associated with the pair
$(\dd, \WWc)$, where $\WWc=(w_i)_{i \geq 0} $ is the grid given by $w_i=z_i + i \xi$ for all $i \ge 0$.
Then, $\HHc$ is also the generalized \gonc
basis associated with the pair $(E_\xi \fixed[0.25]{ \text{ }} \dd, \ZZc)$.
\end{proposition}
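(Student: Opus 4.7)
The plan is to verify directly that the sequence $\HHc$ satisfies the biorthogonality relation \eqref{equ:biortho} with the delta operator $\dd$ replaced by $E_\xi \fixed[0.25]{ \text{ }} \dd$ and the grid $\WWc$ replaced by $\ZZc$, and then invoke the uniqueness part of Theorem~\ref{thm:biortho}~\ref{thm:biortho(1)}.

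First I would check that $E_\xi \fixed[0.25]{ \text{ }} \dd$ is genuinely a delta operator, so that the statement makes sense: it is shift-invariant as a composition of shift-invariant operators, and if $\dd(x) = a$ with $a \in \KK^\times$, then $(E_\xi \fixed[0.25]{ \text{ }} \dd)(x) = E_\xi(a) = a \ne 0$. Next I would compute $(E_\xi \fixed[0.25]{ \text{ }} \dd)^i$. Since any two shift-invariant operators commute (this is the same fact used in the proof of Proposition~\ref{prop:invariance_under_translation}), we may freely reorder the factors to obtain
$$
(E_\xi \fixed[0.25]{ \text{ }} \dd)^i = E_\xi^{\;\!i} \fixed[0.25]{ \text{ }} \iter{\dd}{i} = E_{i\xi} \fixed[0.25]{ \text{ }} \iter{\dd}{i},
$$
where the last equality uses that $E_\xi^{\;\!i} = E_{i\xi}$ on $\KK[x]$.

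With this in hand, for every $i,n \in \NNb$ I would compute
$$
\varepsilon_{z_i}\bigl((E_\xi \fixed[0.25]{ \text{ }} \dd)^i(h_n(x))\bigr) = \varepsilon_{z_i}\bigl(E_{i\xi}(\iter{\dd}{i}(h_n(x)))\bigr) = \varepsilon_{z_i + i\xi}\bigl(\iter{\dd}{i}(h_n(x))\bigr) = \varepsilon_{w_i}\bigl(\iter{\dd}{i}(h_n(x))\bigr) = n!\fixed[0.15]{ \text{ }} \delta_{i,n},
$$
where the last equality is precisely the biorthogonality property of $\HHc$ as the generalized Gon\v{c}arov basis for $(\dd, \WWc)$. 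Since $h_n(x)$ has degree $n$ and the generalized Gon\v{c}arov basis associated with a pair (delta operator, grid) is uniquely determined by its biorthogonality, this identifies $\HHc$ as the generalized Gon\v{c}arov basis associated with $(E_\xi \fixed[0.25]{ \text{ }} \dd, \ZZc)$.

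The only step with any content is the commutation $E_\xi^{\;\!i} \fixed[0.25]{ \text{ }} \iter{\dd}{i} = (E_\xi \fixed[0.25]{ \text{ }} \dd)^i$, which is immediate from the shift-invariance of both operators; accordingly I do not expect any real obstacle, and this proof should be only a few lines long.
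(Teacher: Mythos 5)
Your proof is correct and follows essentially the same route as the paper: expand $(E_\xi\,\dd)^i = E_{i\xi}\,\iter{\dd}{i}$ via commutativity of shift-invariant operators, push the shift into the evaluation point, and invoke the biorthogonality of $\HHc$ for $(\dd,\WWc)$ together with uniqueness. Your preliminary check that $E_\xi\,\dd$ is itself a delta operator is a small addition the paper leaves implicit, but otherwise the arguments coincide.
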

\begin{proof}
One checks that
$$
\epsilon_{z_i}(\iter{(E_\xi \fixed[0.25]{ \text{ }} \dd)}{i}h_n(x)) = n! \fixed[0.15]{ \text{ }} \delta_{i,n}.
$$
To this end, first observe that $
\iter{(E_\xi \fixed[0.25]{ \text{ }} \dd)}{i} = \iter{E_{\xi}}{i} \fixed[0.25]{ \text{ }} \iter{\dd}{i}
= E_{i \xi} \fixed[0.15]{ \text{ }}  \iter{\dd}{i}$,
since any two shift-invariant operators commute
and $E_a \fixed[0.25]{ \text{ }} E_b = E_{a+b}$ for all $a,b \in \KK$. It follows that
$$
\epsilon_{z_i}(\iter{(E_\xi \fixed[0.25]{ \text{ }} \dd)}{i}h_n(x)) = \epsilon_{z_i} \fixed[0.15]{ \text{ }}
E_{i\xi}(\iter{\dd}{i}h_n(x)) = \epsilon_{z_i+i\xi}(\iter{\dd}{i}h_n(x)) =\epsilon_{w_i}(\iter{\dd}{i}h_n(x))
=n! \delta_{i,n}.
$$
The last equation is true  because $\HHc$ is the generalized \gonc basis associated with  $(\dd, \WWc)$.
\end{proof}

The next proposition gives a relation between the generalized \gonc polynomials and the basic polynomials of the
same delta operator. It provides  a linear recurrence that can be used to compute efficiently  the explicit
formulas for the generalized \gonc basis if the basic sequence is known,
for example, as in the classical case where
the basic sequence is $(x^n)_{n \ge 1}$, or in the case of difference \gonc polynomials where the $n$-term of the
basic sequence is the lower factorial $(x-n+1)_{(n)}$, see \cite{KSY}.

\begin{proposition}
\label{prop:recursive_relation}
Let $\TTc = (t_n(x))_{n \ge 0}$ be the generalized \gonc basis associated with the pair $(\dd, \ZZc)$, and
let $(p_n(x))_{n \ge 0}$ be the  sequence of basic polynomials of the delta operator $\dd$. Then, for
all $n \in \NNb$ it holds
\begin{equation*}
p_n(x) = \sum_{i=0}^n \binom{n}{i} p_{n-i}(z_i) \fixed[0.25]{ \text{ }} t_i(x),
\end{equation*}
and hence
\begin{equation}
\label{equ:linear_recursion}
t_n(x) = p_n(x) - \sum_{i=0}^{n-1} \binom{n}{i} p_{n-i}(z_i) \fixed[0.25]{ \text{ }} t_i(x).
\end{equation}
\end{proposition}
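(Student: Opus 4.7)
My plan is to apply the expansion formula \eqref{equ:expansion_formula} from Theorem \ref{thm:biortho}\ref{thm:biortho(2)} directly to $f(x) = p_n(x)$. Since $p_n(x)$ has degree $n$, the sum truncates at $i = n$, giving
\begin{equation*}
p_n(x) = \sum_{i=0}^{n} \frac{\epsilon_{z_i}(\iter{\dd}{i}(p_n(x)))}{i\fixed[0.15]{ \text{ }}!} \fixed[0.15]{ \text{ }} t_i(x).
\end{equation*}

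The main computation will be to evaluate $\epsilon_{z_i}(\iter{\dd}{i}(p_n))$ in closed form. For this I would iterate the defining relation $\dd(p_n) = n \fixed[0.15]{ \text{ }} p_{n-1}$ from the definition of the basic sequence to obtain $\iter{\dd}{i}(p_n(x)) = n_{(i)} \fixed[0.15]{ \text{ }} p_{n-i}(x)$ for $0 \le i \le n$, and then evaluate at $z_i$ to get $\epsilon_{z_i}(\iter{\dd}{i}(p_n)) = n_{(i)} \fixed[0.15]{ \text{ }} p_{n-i}(z_i)$. Substituting this into the expansion and using $\frac{n_{(i)}}{i!} = \binom{n}{i}$ yields the first identity.

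For the second identity, I would isolate the $i = n$ term in the first identity. Since $p_0(x) = 1$, this term equals $\binom{n}{n} p_0(z_n) \fixed[0.15]{ \text{ }} t_n(x) = t_n(x)$, so solving for $t_n(x)$ gives exactly \eqref{equ:linear_recursion}.

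I do not anticipate a real obstacle: the only nontrivial ingredient is the iterated formula $\iter{\dd}{i}(p_n) = n_{(i)} p_{n-i}$, which is immediate from property (3) in the definition of the basic sequence by induction on $i$. Everything else is direct substitution into \eqref{equ:expansion_formula}.
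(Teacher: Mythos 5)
Your proposal is correct and follows essentially the same route as the paper: substitute $p_n(x)$ into the expansion formula \eqref{equ:expansion_formula}, use $\iter{\dd}{i}(p_n) = n_{(i)}\,p_{n-i}$ to evaluate the coefficients, and simplify with $n_{(i)}/i! = \binom{n}{i}$. Your explicit isolation of the $i=n$ term (using $p_0 = 1$) to derive \eqref{equ:linear_recursion} is a small detail the paper leaves implicit, but the argument is identical in substance.
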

\begin{proof}
Let $n \in \NNb$.
Substituting $f(x)$ with $p_n(x)$ in Eq. \eqref{equ:expansion_formula} we obtain
\begin{equation*}
p_n(x) = \sum_{i=0}^n \frac{\varepsilon_{z_i}(\iter{\dd}{i}p_n)}{i \fixed[0.15]{ \text{ }} !} \fixed[0.15]{ \text{ }} t_i(x)
= \sum_{i=0}^n \frac{(n)_{(i)} \fixed[0.15]{ \text{ }} p_{n-i}(z_i)}{i \fixed[0.15]{ \text{ }} !} \fixed[0.15]{ \text{ }}
\fixed[0.15]{ \text{ }} t_i(x) = \sum_{i=0}^n \binom{n}{i} p_{n-i}(z_i)   \fixed[0.25]{ \text{ }} t_i(x),
\end{equation*}
where we use the fact that
$
\iter{\dd}{i}(p_n(x)) = (n)_{(i)} \fixed[0.15]{ \text{ }} p_{n-i}(x)$ for all $i=0, 1, \ldots, n$.

\end{proof}

Proposition \ref{prop:binomial_expansion_of_GGPs} generalizes the binomial expansion for classical \gonc polynomials.

\begin{proposition}
\label{prop:binomial_expansion_of_GGPs}
Let $(t_n^{(j)}(x))_{n \ge 0}$ be the generalized \gonc basis associated with the pair $(\dd, \ZZc^{(j)})$,
and let $(p_n(x))_{n \ge 0}$ be the  sequence of basic polynomials of the delta operator $\dd$.
Then, for all $\xi \in \KK$ and $n \in \NNb$ we have the following ``binomial identity'':
\begin{equation}
\label{equ:binomial_expansion_formula}
t_n(x+\xi)= t_n^{(0)}(x+\xi) = \sum_{i=0}^n \binom{n}{i} t_{n-i}^{(i)}(\xi) \fixed[0.25]{ \text{ }} p_i(x).
\end{equation}
In particular, letting $\xi=0$ we have
\begin{equation}\label{equ:zero}
t_n(x)=t_n^{(0)}(x) = \sum_{i=0}^n \binom{n}{i} t_{n-i}^{(i)}(0) \fixed[0.25]{ \text{ }} p_i(x).
\end{equation}
\end{proposition}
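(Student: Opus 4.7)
The plan is to apply the general expansion formula \eqref{equ:expansion_formula} in two different ways: once with respect to the pair $(\dd, \ZZc)$ (yielding the $t_i$'s) and, more crucially here, once with respect to the pair $(\dd, \OOc)$, whose associated generalized \gonc basis is precisely the basic sequence $(p_i(x))_{i \ge 0}$ by Proposition~\ref{prop:basic_sequence_only_if_generalized_goncarov}. The second formula reads
\[
f(x) = \sum_{i=0}^{\deg(f)} \frac{\varepsilon_0(\iter{\dd}{i}(f))}{i\fixed[0.15]{ \text{ }}!} \fixed[0.15]{ \text{ }} p_i(x),
\]
valid for every $f(x) \in \KK[x]$. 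Substituting $\xi=0$ in \eqref{equ:binomial_expansion_formula} is an immediate consequence of the general $\xi$ case, so I would focus on the main identity.

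First I would fix $n, \xi$ and set $f(x) := t_n(x+\xi) = E_\xi(t_n)(x)$, which is a polynomial of degree $n$ in $x$. Applying the expansion formula just recalled to this $f$ gives
\[
t_n(x+\xi) = \sum_{i=0}^n \frac{\varepsilon_0(\iter{\dd}{i}(E_\xi(t_n)))}{i\fixed[0.15]{ \text{ }}!} \fixed[0.15]{ \text{ }} p_i(x).
\]
Next, since $\dd$ is shift-invariant, $\iter{\dd}{i}$ commutes with $E_\xi$, so $\iter{\dd}{i}(E_\xi(t_n)) = E_\xi(\iter{\dd}{i}(t_n))$, and evaluation at $0$ composed with $E_\xi$ equals evaluation at $\xi$. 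Hence $\varepsilon_0(\iter{\dd}{i}(E_\xi(t_n))) = \varepsilon_\xi(\iter{\dd}{i}(t_n))$.

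The remaining step is to identify $\iter{\dd}{i}(t_n)(\xi)$ in terms of $t_{n-i}^{(i)}(\xi)$, which is exactly the content of the last formula in Proposition~\ref{prop:differential_formula}: $\iter{\dd}{i}(t_n(x)) = n_{(i)} \fixed[0.15]{ \text{ }} t_{n-i}^{(i)}(x)$. Evaluating at $\xi$ and inserting into the display above yields
\[
t_n(x+\xi) = \sum_{i=0}^n \frac{n_{(i)} \fixed[0.15]{ \text{ }} t_{n-i}^{(i)}(\xi)}{i\fixed[0.15]{ \text{ }}!} \fixed[0.15]{ \text{ }} p_i(x) = \sum_{i=0}^n \binom{n}{i} t_{n-i}^{(i)}(\xi) \fixed[0.25]{ \text{ }} p_i(x),
\]
which is \eqref{equ:binomial_expansion_formula}; specializing $\xi=0$ yields \eqref{equ:zero}.

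There is no real obstacle here: the whole argument is a bookkeeping exercise that combines (i) uniqueness of the expansion in the basic sequence via \eqref{equ:expansion_formula} applied to $(\dd,\OOc)$, (ii) the commutation $\iter{\dd}{i} E_\xi = E_\xi \iter{\dd}{i}$ coming from shift-invariance, and (iii) the iterated derivative formula from Proposition~\ref{prop:differential_formula}. The only minor point worth stating cleanly is the identification of the basic sequence with the \gonc basis at the zero grid; once that is in place, the computation is essentially forced.
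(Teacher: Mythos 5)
Your proposal is correct and follows essentially the same route as the paper: both expand $E_\xi t_n$ in the basis $(p_i)$, extract the coefficients via $\varepsilon_0\circ\iter{\dd}{j}$ (your appeal to the expansion formula for the zero grid is just this step packaged as a citation of \eqref{equ:expansion_formula} plus Proposition~\ref{prop:basic_sequence_only_if_generalized_goncarov}), and then apply the commutation $\iter{\dd}{j}E_\xi = E_\xi\iter{\dd}{j}$ together with $\iter{\dd}{j}t_n = (n)_{(j)}t_{n-j}^{(j)}$ from Proposition~\ref{prop:differential_formula}. No substantive differences.
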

\begin{proof}
Fix $\xi \in \KK$ and $n \in \NNb$. Since $(p_i(x))_{0 \le i \le n}$ is a basis of the linear subspace of
$\KK[x]$ of polynomials of degree $\le n$, there exist $c_0, \ldots, c_n \in K$ such that
$E_\xi t_n(x) = \sum_{i=0}^n c_i \fixed[0.25]{ \text{ }} p_i(x)$, where $t_n := t_n^{(0)}$.

Pick  an integer $j \in [ 0, n ]$.  One computes
\begin{equation}
\label{equ:binomial_expansion_proof}
\epsilon_\xi(\iter{\dd}{j}t_n(x))
    = \epsilon_0(E_\xi \fixed[0.25]{ \text{ }} \iter{\dd}{j}(t_n(x))) = \epsilon_0(\iter{\dd}{j}(E_\xi t_n(x))) =
    \sum_{i=0}^n c_i \fixed[0.25]{ \text{ }} \epsilon_0(\iter{\dd}{j} \fixed[0.25]{ \text{ }} p_i(x)).
\end{equation}
From Proposition \ref{prop:differential_formula} we have $\iter{\dd}{j} t_n(x) = (n)_{(j)} \fixed[0.25]{ \text{ }} t_{n-j}^{(j)}(x)$,
so that $\epsilon_\xi(\iter{\dd}{j}t_n(x)) = (n)_{(j)} \fixed[0.25]{ \text{ }} t_{n-j}^{(j)}(\xi)$.
On the other hand, the sequence $(p_n(x))_{n \ge 0}$, being the basic polynomials of $\dd$, satisfies
$\epsilon_0(\iter{\dd}{j} \fixed[0.25]{ \text{ }} p_i) = i\fixed[0.15]{ \text{ }}! \fixed[0.25]{ \text{ }} \delta_{i,j}$
for all $i \in \NNb$.
Combining the above we obtain from \eqref{equ:binomial_expansion_proof} that
$$
c_j = \frac{(n)_{(j)}}{j\fixed[0.15]{ \text{ }}!} \fixed[0.25]{ \text{ }} t_{n-j}^{(j)}(\xi) = \binom{n}{j} t_{n-j}^{(j)}(\xi),
$$
which proves \eqref{equ:binomial_expansion_formula} .
\end{proof}

\begin{corollary}
\label{cor:binomial_expansion_of_GGPs_1}
Assume $\ZZc$ is a constant grid, namely $z_i = z_j$ for all $i,j$. Let $(t_n(x))_{n \ge 0}$ be the generalized \gonc basis
associated with the pair $(\dd, \ZZc)$, and $(p_n(x))_{n \ge 0}$ the  sequence of basic polynomials of the delta operator $\dd$. Then,
for all $\xi \in K$ and $n \in \NNb$ we have
\begin{equation}
\label{equ:binomial_expansion_formula_cor1}
t_n(x+\xi) = \sum_{i=0}^n \binom{n}{i} t_{n-i}(\xi) \fixed[0.25]{ \text{ }} p_i(x)\quad\textrm{and}\quad
t_n(x) = \sum_{i=0}^n \binom{n}{i} t_{n-i}(0) \fixed[0.25]{ \text{ }} p_i(x).
\end{equation}
\end{corollary}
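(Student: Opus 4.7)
The plan is to derive both identities as an immediate specialization of Proposition \ref{prop:binomial_expansion_of_GGPs} once we observe that a constant grid is fixed under shifting.

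First I would note that if $\ZZc = (z_i)_{i \ge 0}$ is constant, i.e.\ $z_i = z_j$ for all $i,j$, then every shift $\ZZc^{(j)}$ of $\ZZc$ coincides with $\ZZc$ itself, because the $i$-th term of $\ZZc^{(j)}$ is $z_{i+j} = z_i$. By the uniqueness of the generalized Gon\v{c}arov basis associated with a given pair $(\dd, \ZZc)$, this forces
\[
t_n^{(j)}(x) = t_n(x) \qquad \text{for all } j, n \in \NNb,
\]
where $(t_n^{(j)}(x))_{n \ge 0}$ is the basis attached to $(\dd,\ZZc^{(j)})$ in the notation of Proposition \ref{prop:differential_formula}.

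Next I would substitute this identity into the binomial expansion \eqref{equ:binomial_expansion_formula} of Proposition \ref{prop:binomial_expansion_of_GGPs}, replacing each $t_{n-i}^{(i)}(\xi)$ by $t_{n-i}(\xi)$. This directly yields the first identity
\[
t_n(x+\xi) = \sum_{i=0}^n \binom{n}{i} t_{n-i}(\xi) \fixed[0.25]{ \text{ }} p_i(x).
\]
Setting $\xi = 0$ (equivalently, using \eqref{equ:zero} with $t_{n-i}^{(i)}(0) = t_{n-i}(0)$) produces the second identity, completing the proof.

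The main step, if any, is the initial observation that a constant grid is invariant under the shift operation on grids together with the appeal to uniqueness; the rest is a direct quotation of the preceding proposition. No delicate computation is required.
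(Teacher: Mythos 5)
Your proof is correct and follows the paper's argument exactly: the paper also deduces the corollary as an immediate consequence of Proposition \ref{prop:binomial_expansion_of_GGPs} together with the observation that $\ZZc = \ZZc^{(j)}$ for every $j$ when the grid is constant. Your added remark about invoking uniqueness of the generalized Gon\v{c}arov basis to conclude $t_n^{(j)} = t_n$ just makes explicit a step the paper leaves implicit.
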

\begin{proof}
Immediate by Proposition \ref{prop:binomial_expansion_of_GGPs} and the fact that $\ZZc = \ZZc^{(j)}$ for all $j$.
\end{proof}

The next proposition gives an extension of the integral formula for classical \gonc polynomials, see \cite[p. 23]{KuYan}.
\begin{proposition}\label{prop:integral}
Let $\dd$ be a delta operator. Then $\dd$ has a right inverse, i.e., there exists a linear operator $\dd^{-1}: \KK[x] \to \KK[x]$ such that
$\dd(\dd^{-1}(f(x))) = f$ for all $f (x)\in \KK[x]$, $\deg(\dd^{-1}(f(x))) = 1 + \deg(f(x))$ for $f \ne 0$, and $\dd^{-1}(f)(0) = 0$.
\end{proposition}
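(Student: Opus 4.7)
The plan is to construct $\dd^{-1}$ by first prescribing its action on the basic sequence $(p_n(x))_{n \ge 0}$ of $\dd$ and then extending by linearity. Specifically, I would set
\[
\dd^{-1}(p_n(x)) := \frac{1}{n+1}\, p_{n+1}(x) \qquad \text{for every } n \in \NNb,
\]
which is meaningful because $\KK$ has characteristic zero. Since $\deg(p_n) = n$ for all $n$, the family $\PPc = (p_n(x))_{n \ge 0}$ is a basis of $\KK[x]$, so the above prescription extends uniquely to a well-defined $\KK$-linear map $\dd^{-1} : \KK[x] \to \KK[x]$.

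To verify the three required properties, I would expand a generic polynomial $f(x) \in \KK[x]$ of degree $d$ in the basis $\PPc$, writing $f(x) = \sum_{n=0}^{d} c_n\, p_n(x)$ with $c_d \ne 0$. By linearity,
\[
\dd^{-1}(f(x)) = \sum_{n=0}^{d} \frac{c_n}{n+1}\, p_{n+1}(x),
\]
whose top term is a nonzero multiple of $p_{d+1}(x)$; hence $\deg(\dd^{-1}(f)) = d+1 = 1 + \deg(f)$ whenever $f \ne 0$. Applying $\dd$ term by term and using the defining property $\dd(p_{n+1}) = (n+1)\, p_n$ of the basic sequence, the coefficients telescope back to $c_n$ and recover $f(x)$, proving $\dd \circ \dd^{-1} = \mathrm{id}_{\KK[x]}$. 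Finally, evaluating at $x = 0$ and invoking the fact that $p_{n+1}(0) = 0$ for every $n \ge 0$ gives $\dd^{-1}(f)(0) = 0$.

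There is essentially no obstacle here: once one recognizes that the basic sequence of $\dd$ supplies a natural ``antiderivative'' (mimicking $\int x^n\, dx = x^{n+1}/(n+1)$ in the classical case $\dd = D$), the remainder is linear-algebraic bookkeeping. The one conceptual point worth flagging in the writeup is that $\dd^{-1}$ is \emph{not} an inverse of $\dd$ in the ring of shift-invariant operators (where $\dd$ is not invertible, as already noted after the definition of delta operators), but rather a right inverse in the weaker sense of a section $\KK[x] \to \KK[x]$ of the surjection $\dd$, singled out among all such sections by the normalization $\dd^{-1}(f)(0) = 0$.
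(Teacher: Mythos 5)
Your construction is exactly the one the paper uses: define $\dd^{-1}$ on the basic sequence by $p_n \mapsto \frac{1}{n+1}p_{n+1}$, extend by linearity, and verify the three properties via $\dd(p_{n+1})=(n+1)p_n$, the degree count, and $p_{n+1}(0)=0$. The proof is correct and matches the paper's argument in every essential respect.
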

\begin{proof}
Let $\PPc = (p_n(x))_{n \ge 0}$ denote the  sequence of basic polynomials of the delta operator $\dd$. Then we can define an
operator $\dd^{-1}: \KK[x] \to \KK[x]$ as follows: Given a polynomial $f(x) \in \KK[x]$ of degree $n$, let  $a_0, \ldots, a_n \in K$
be such that $f(x) = \sum_{i=0}^n a_i p_i(x)$, set
$$
\dd^{-1}(f(x)) := \sum_{i=0}^n \frac{a_i}{i+1} p_{i+1}(x).
$$
It is seen that $\dd^{-1}$ is a linear operator on $\KK[x]$, and since $\dd(p_n(x)) = n\fixed[0.15]{ \text{ }} p_{n-1}(x)$
and $p_n(0) = 0$ for all $n \in \NNb^+$, we have as well that $\dd(\dd^{-1}(f(x))) = f(x)$ and $\dd^{-1}(f)(0) = 0$ for
every $f(x) \in \KK[x]$. The rest is trivial, when considering that $p_n(x)$ is, for each $n \in \NNb^+$, a polynomial of degree $n$.
\end{proof}
Thus, we have the following generalization of the integral formula for classical \gonc polynomials, see \cite[p. 23]{KuYan}.
\begin{proposition}
Let $\dd$ be a delta operator and $\dd^{-1}$ its right inverse (which exists by Proposition \ref{prop:integral}),
and let $(t_n(x))_{n \ge 0}$ be the generalized \gonc basis associated with the pair $(\dd, \ZZc)$. Then, for every
$n, k \in \NNb$ with $k \le n$, it holds that
$t_n(x) = (n)_{(k)} \cdot \mathcal{I}_k(t_{n-k}^{(k)}(x))$, where $(t_i^{(n)}(x))_{n \ge 0}$ is the generalized \gonc
basis associated with the pair $(\dd, \ZZc^{(k)})$ and $\mathcal{I}_k$ the linear operator $\prod_{i=0}^{k-1} (1 - \varepsilon_{z_i}) \dd^{-1}$.
\end{proposition}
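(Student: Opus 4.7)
The plan is to verify that $u_n(x) := (n)_{(k)}\mathcal{I}_k(t_{n-k}^{(k)}(x))$ has degree $n$ and satisfies the biorthogonality conditions $\varepsilon_{z_i}(\iter{\dd}{i}(u_n)) = n!\fixed[0.15]{ \text{ }}\delta_{i,n}$ for all $0 \le i \le n$; the uniqueness part of Theorem \ref{thm:biortho} then forces $u_n = t_n$.

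First I would isolate three properties of the building blocks $L_a := (1-\varepsilon_a)\dd^{-1}$ with $a\in\KK$. They are: (i) $L_a(f)(a)=0$ for every $f\in\KK[x]$, by direct substitution; (ii) $\dd\circ L_a = I$, because $\dd\dd^{-1}=I$ while $\dd$ annihilates the constant $\dd^{-1}(f)(a)$; and (iii) $L_a$ raises the degree by exactly one, by Proposition \ref{prop:integral}. An immediate consequence of (ii), by telescoping, is the structural identity
\[
\iter{\dd}{i}\circ\mathcal{I}_k \;=\; L_{z_i}\circ L_{z_{i+1}}\circ\cdots\circ L_{z_{k-1}} \qquad (0\le i\le k),
\]
where the right-hand side is understood to be the identity when $i=k$. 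Property (iii) then gives $\deg(u_n)=n$.

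The biorthogonality check splits into three cases. For $0\le i < k$, the structural identity shows that $\iter{\dd}{i}(u_n)$ begins (from the outside) with $L_{z_i}$, so property (i) yields $\varepsilon_{z_i}(\iter{\dd}{i}(u_n))=0$. For $i=k$, the identity collapses to $\iter{\dd}{k}(u_n)=(n)_{(k)}\fixed[0.25]{ \text{ }} t_{n-k}^{(k)}(x)$, whose value at $z_k$ (the initial node of $\ZZc^{(k)}$) is $0$ when $n>k$ by Proposition \ref{prop:trivialities_on_generalized_Goncarov_polynomials} and equals $n!$ when $n=k$. For $k<i\le n$, applying $\iter{\dd}{i-k}$ to the previous expression and invoking Proposition \ref{prop:differential_formula} gives $\iter{\dd}{i}(u_n)=(n)_{(i)}\fixed[0.25]{ \text{ }} t_{n-i}^{(i)}(x)$, and Proposition \ref{prop:trivialities_on_generalized_Goncarov_polynomials} once more produces $\varepsilon_{z_i}(\iter{\dd}{i}(u_n))=n!\fixed[0.25]{ \text{ }}\delta_{i,n}$.

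The one subtlety to watch is the direction of composition in the structural identity: what drives the telescoping is $\dd\circ L_a = I$, while $L_a\circ\dd$ is \emph{not} the identity — the same discrepancy between $\dd\dd^{-1}$ and $\dd^{-1}\dd$ that is implicit in the proof of Proposition \ref{prop:integral}. This is precisely why inserting the evaluations $(1-\varepsilon_{z_i})$ between consecutive $\dd^{-1}$'s (rather than iterating $\dd^{-1}$ alone) is essential for the formula to hold.
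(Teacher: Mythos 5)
Your proposal is correct, but it takes a different route from the paper. The paper argues by induction on $k$: the inductive step reduces to the single identity $t_{n-k}^{(k)}(x) = (n-k)\,(1-\varepsilon_{z_k})\dd^{-1}(t_{n-k-1}^{(k+1)}(x))$, which is established by noting that both sides vanish at $z_k$ and have the same image under $\dd$ (so they differ by an element of $\ker\dd$, i.e.\ a constant, which must be zero). You instead verify directly that $u_n := (n)_{(k)}\mathcal{I}_k(t_{n-k}^{(k)})$ has degree $n$ and satisfies the full biorthogonality relations $\varepsilon_{z_i}(\iter{\dd}{i}(u_n)) = n!\,\delta_{i,n}$, and then invoke the uniqueness in Theorem \ref{thm:biortho}; the engine is your telescoping identity $\iter{\dd}{i}\circ\mathcal{I}_k = L_{z_i}\circ\cdots\circ L_{z_{k-1}}$ together with $L_a(f)(a)=0$ and Propositions \ref{prop:differential_formula} and \ref{prop:trivialities_on_generalized_Goncarov_polynomials}. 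All three cases of your case analysis check out (in the range $k<i\le n$ the factorial bookkeeping $(n)_{(k)}(n-k)_{(i-k)}=(n)_{(i)}$ is right, and for $i>n$ the condition is automatic by degree), and your property (iii) legitimately supplies the degree count that the paper leaves implicit. The paper's induction is more economical, needing only the kernel-of-$\dd$ observation at each step; your argument is longer but more self-contained, ties the formula directly back to the defining property of the Gon\v{c}arov basis, and your closing remark correctly isolates why the interleaved evaluations $(1-\varepsilon_{z_i})$ are what make the composition of right inverses behave.
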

\begin{proof}
Fix $n, k \in \NNb$ with $0 \le k \le n$. If $k = 0$, the claim is trivial, because $\mathcal{I}_0$ is the identity operator.
Now, suppose we have already confirmed that the statement is true for $0 \le k < n$; then, by induction, we are just left to
show that it continues to be true for $k+1$.

For this, it is enough to prove that $t_{n-k}^{(k)}(x) = (n-k) \cdot (1 - \varepsilon_{z_k}) \dd^{-1} (t_{n-k-1}^{(k+1)}(x))$.
It follows from the facts that (i) both $t_{n-k}^{(k)}(x)$ and $(1 - \varepsilon_{z_k}) \dd^{-1} (t_{n-k-1}^{(k+1)}(x))$ are zero
when evaluated at $z_k$, and (ii) we have $\dd(t_{n-k}^{(k)}(x)) = (n-k) t_{n-k-1}^{(k+1)}(x)$ by
Proposition \ref{prop:differential_formula}, and
$$
\dd((1 - \varepsilon_{z_k}) \dd^{-1} (t_{n-k-1}^{(k+1)}(x))) = \dd(\dd^{-1} t_{n-k-1}^{(k+1)}(x)) - \dd(\varepsilon_{z_k}
\dd^{-1} (t_{n-k-1}^{(k+1)}(x))) = t_{n-k-1}^{(k+1)}(x),
$$
where we used Proposition \ref{prop:integral} and the fact that $\varepsilon_{z_k} \dd^{-1} (t_{n-k-1}^{(k+1)}(x))$ is
a constant (and $\dd$ applied to a constant is zero).
\end{proof}
%
%
In addition, we have the following generalization of the ``perturbation formulas'' obtained in \cite[p. 24]{KuYan} and \cite[p. 5]{KSY}.
\begin{proposition}
Let $\dd$ be a delta operator, and let $\ZZc = (z_i)_{i \ge 0}$ and $\ZZc^\prime = (z_i^\prime)_{i \ge 0}$ be $\KK$-grids such that
$z_k \ne z_k^\prime$ for a given $k \in \NNb$ and $z_i = z_i^\prime$ for $i \ne k$. Then we have, for $n > k$, that
\begin{equation}\label{equ:perturbated_polys}
\tt_n(x\fixed[0.25]{ \text{ }}; \dd, \ZZc^\prime) =
\tt_n(x\fixed[0.25]{ \text{ }}; \dd, \ZZc) - \binom{n}{k} \tt_{n-k}(z_k^\prime\fixed[0.25]{ \text{ }}; \dd, \ZZc^{(k)}) \fixed[0.25]{ \text{ }} t_k(x\fixed[0.25]{ \text{ }}; \dd, \ZZc),
\end{equation}
while $\tt_n(x\fixed[0.25]{ \text{ }}; \dd, \ZZc^\prime) = \tt_n(x\fixed[0.25]{ \text{ }}; \dd, \ZZc)$ for $n \le k$.
\end{proposition}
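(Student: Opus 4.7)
My plan is to invoke the uniqueness statement in Theorem~\ref{thm:biortho}(1): it suffices to show that the right-hand side of \eqref{equ:perturbated_polys}, which is visibly a polynomial of degree exactly $n$ (because $\deg t_k = k < n$ while $\deg t_n = n$), satisfies the biorthogonality conditions $\varepsilon_{z_i'}(\iter{\dd}{i}(\cdot)) = n!\fixed[0.15]{ \text{ }}\delta_{i,n}$ for $i = 0, 1, \ldots, n$. A useful preliminary observation is that $t_n(x;\dd,\ZZc)$ depends only on the truncated tuple $(z_0, \ldots, z_{n-1})$: indeed, when $i = n$ the polynomial $\iter{\dd}{n}(t_n)$ is a constant, so the condition $\varepsilon_{z_n}(\iter{\dd}{n}(t_n)) = n!$ is insensitive to $z_n$. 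For $n \le k$ the nodes $z_0, \ldots, z_{n-1}$ coincide with $z_0', \ldots, z_{n-1}'$, so uniqueness forces $t_n(x;\dd,\ZZc') = t_n(x;\dd,\ZZc)$, settling the easy range.

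For $n > k$, let $P(x)$ denote the right-hand side of \eqref{equ:perturbated_polys}; I would verify the required $n+1$ biorthogonality relations case by case. When $i \ne k$ one has $z_i' = z_i$, and the desired value (either $0$ or, when $i = n$, $n!$) follows immediately from the biorthogonality of $t_n(x;\dd,\ZZc)$ and $t_k(x;\dd,\ZZc)$ with respect to $\ZZc$, together with the degree-based vanishing $\iter{\dd}{i}(t_k) = 0$ whenever $i > k$.

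The heart of the argument is the case $i = k$, for which I would invoke Proposition~\ref{prop:differential_formula} with $j = k$ to rewrite $\iter{\dd}{k}(t_n(x;\dd,\ZZc)) = (n)_{(k)}\fixed[0.15]{ \text{ }} t_{n-k}^{(k)}(x)$, where $(t_m^{(k)}(x))_{m \ge 0}$ is the generalized \gonc basis associated with the shifted pair $(\dd, \ZZc^{(k)})$; evaluating at $z_k'$ produces $(n)_{(k)}\fixed[0.15]{ \text{ }} t_{n-k}(z_k';\dd,\ZZc^{(k)})$. On the other hand, $\iter{\dd}{k}(t_k(x;\dd,\ZZc))$ is the constant $k!$, so its value at $z_k'$ is again $k!$. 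The coefficient $\binom{n}{k}$ appearing in \eqref{equ:perturbated_polys} is tuned precisely so that, via the elementary identity $(n)_{(k)} = \binom{n}{k}\fixed[0.15]{ \text{ }} k!$, the two contributions cancel, leaving $0 = n!\fixed[0.15]{ \text{ }}\delta_{k,n}$ as required. No serious obstacle is expected; the only step that genuinely requires thought is recognizing that Proposition~\ref{prop:differential_formula} is the right tool to bring in the shifted grid $\ZZc^{(k)}$, thereby explaining why the evaluation $t_{n-k}(z_k';\dd,\ZZc^{(k)})$ is the natural coefficient in the perturbation formula.
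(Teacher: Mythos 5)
Your proof is correct and follows essentially the same route as the paper's: reduce to the uniqueness clause of Theorem~\ref{thm:biortho}, note that $t_n$ depends only on the first $n$ nodes (settling $n\le k$), and verify the biorthogonality conditions for the right-hand side, with Proposition~\ref{prop:differential_formula} supplying the key $i=k$ cancellation via $(n)_{(k)}=\binom{n}{k}k!$. The paper explicitly omits the details of that verification, so your write-up is just a fully worked-out version of its argument.
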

\begin{proof}
Let $n \in \NNb$ and denote by $f_n(x)$ the polynomial on the right-hand side of \eqref{equ:perturbated_polys}. The claim is
straightforward if $n \le k$, essentially because we get by Theorem \ref{thm:biortho}\ref{thm:biortho(1)}
that $\tt_n(x\fixed[0.25]{ \text{ }}; \dd, \ZZc)$ and $\tt_n(x\fixed[0.25]{ \text{ }}; \dd, \ZZc^\prime)$ depend
only on the first $n$ nodes of $\ZZc$ and $\ZZc^\prime$, respectively. Accordingly, assume in what follows that $n > k$ and fix $i \in \NNb$.
By the unicity of the generalized \gonc basis associated to the pair $(\dd, \ZZc^\prime)$, we just have to prove
that $\varepsilon_{z_i^\prime}(\iter{\dd}{i}f_n(x)) = n! \fixed[0.2]{ \text{ }}\delta_{i,n}$. This is immediate if $i > n$,
since then $\iter{\dd}{i}f_n(x) = 0$, and for $i \le n$ it is a consequence of Proposition \ref{prop:differential_formula}
(we omit further details).
\end{proof}
We conclude the present section by proving that generalized \gonc bases obey an Appell relation, which extends an analogous
result from \cite[Section 3]{KuYan}.
%
\begin{proposition}
\label{prop:appell_relation}
Let $(t_n(x))_{n \ge 0}$ be the generalized \gonc basis associated with the pair $(\dd, \ZZc)$. In addition, denote by $d(t)$
the compositional inverse of the $D$-indicator of $\dd$ in $\KK \llb t\rrb $.
Then the following identity holds,
\begin{equation}
\label{equ:appell_relation}
e^{x \fixed[0.15]{ \text{ }} d(t)} = \sum_{n \ge 0} \frac{1}{n!} t_n(x)\fixed[0.15]{ \text{ }} e^{z_n d(t)} t^n.
\end{equation}
In particular, if $\dd = D$ then $
e^{xt} = \sum_{n \ge 0} \frac{1}{n!} t_n(x) \fixed[0.15]{ \text{ }} e^{z_n t} t^n$.
\end{proposition}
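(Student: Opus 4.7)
The plan is to start from the exponential generating function \eqref{eq:EGF} for the basic polynomials of $\dd$ and rewrite it using the expansion of $p_n(x)$ in the generalized Gon\v{c}arov basis provided by Proposition~\ref{prop:recursive_relation}. Since the compositional inverse of the $D$-indicator $q(t)$ of $\dd$ is exactly $d(t) = q^{-1}(t)$, formula \eqref{eq:EGF} says that the left-hand side of \eqref{equ:appell_relation} is the exponential generating function of the basic sequence:
\[
e^{x d(t)} = \sum_{n \ge 0} \frac{p_n(x)}{n!}\, t^n.
\]

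Next, I would substitute the identity
\[
p_n(x) = \sum_{i=0}^{n} \binom{n}{i}\, p_{n-i}(z_i)\, t_i(x)
\]
from Proposition~\ref{prop:recursive_relation} into the coefficient of $t^n$. Expanding the binomial coefficient and exchanging the order of summation (which is legitimate in $\KK\llb t\rrb$, since for each fixed $i$ the inner series is well defined), the double sum becomes
\[
\sum_{i \ge 0} \frac{t_i(x)}{i!}\, t^i \sum_{m \ge 0} \frac{p_m(z_i)}{m!}\, t^m,
\]
after the substitution $m = n-i$. Applying \eqref{eq:EGF} once more, this time at $x = z_i$, the inner factor collapses to $e^{z_i d(t)}$, so the whole expression equals the right-hand side of \eqref{equ:appell_relation}.

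The special case $\dd = D$ is then immediate: the $D$-indicator of $D$ is $q(t) = t$, hence $d(t) = t$ and \eqref{equ:appell_relation} reduces to $e^{xt} = \sum_{n \ge 0} \frac{1}{n!} t_n(x)\, e^{z_n t}\, t^n$.

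There is essentially no conceptual obstacle; the only point that needs a brief justification is the swap of the two summations. Because the coefficient of any fixed power $t^N$ on both sides is a finite linear combination (indexed by $i = 0, 1, \dots, N$), the identity is to be understood coefficient-wise in $\KK\llb t\rrb$, and the manipulation is valid without any analytic convergence considerations. The heart of the argument is therefore just the combination of two known pieces: formula \eqref{eq:EGF} for the basic sequence, and the expansion \eqref{equ:linear_recursion} of $p_n(x)$ in terms of the generalized Gon\v{c}arov basis.
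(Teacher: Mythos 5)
Your argument is correct and follows exactly the paper's own proof: both start from the exponential generating function \eqref{eq:EGF} for the basic sequence, substitute the expansion $p_n(x) = \sum_{i=0}^n \binom{n}{i} p_{n-i}(z_i)\, t_i(x)$ from Proposition~\ref{prop:recursive_relation}, interchange the summations, and identify the inner sum as $e^{z_i d(t)}$. No gaps; your remark that the interchange is a coefficient-wise finite rearrangement in $\KK\llb t\rrb$ is a welcome (if implicit in the paper) justification.
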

%
%
\begin{proof}
Let $(p_n(x))_{n \ge 0}$ the sequence of basic polynomials  of $\dd$. By Proposition \ref{prop:recursive_relation}
$$
p_n(x) = \sum_{i=0}^n \binom{n}{i} p_{n-i}(z_i)\fixed[0.15]{ \text{ }} t_i(x)  \qquad \text{for all } n \in \NNb,
$$
whence we get that
\begin{equation}
\label{equ:expanding_towards_Appell}
\sum_{n \ge 0} \frac{p_n(x) }{n!} t^n
= \sum_{n \ge 0} \sum_{i=0}^n \frac{1}{i\fixed[0.15]{ \text{ }}! (n-i)!} p_{n-i}(z_i) \fixed[0.15]{ \text{ }} t_i(x)
\fixed[0.15]{ \text{ }} t^n
= \sum_{i \ge 0} \left(\frac{1}{i\fixed[0.15]{ \text{ }}!} t_i(x)\fixed[0.15]{ \text{ }} t^i \sum_{j \ge 0}
\frac{p_j(z_i)}{j\fixed[0.1]{ \text{ }}!} t^j \right)\!.
\end{equation}
On the other hand,   Formula \eqref{eq:EGF} gives
$$
e^{x \fixed[0.15]{ \text{ }} d(t)} = \sum_{n \ge 0} \frac{p_n(x)}{n!}\fixed[0.15]  t^n,
$$
which, together with \eqref{equ:expanding_towards_Appell}, implies \eqref{equ:appell_relation}.
The rest is trivial, when considering that the $D$-indicator of $D$ over $\KK\llb t \rrb$ is just $t$.
\end{proof}
Using Proposition \ref{prop:appell_relation}, we obtain   the following characterization of
sequences of binomial type, which is complementary to Corollary \ref{cor:characterization_of_binomial_sequences}.
\begin{proposition}
Let $\TTc = (t_n(x))_{n \ge 0}$ be the generalized \gonc basis associated with the pair $(\dd, \ZZc)$.
Then $\TTc$ is  of binomial type if and only if $\ZZc$ is an arithmetic grid of initial term $0$.
\end{proposition}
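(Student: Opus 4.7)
The plan is to establish the two implications separately, leaning on Proposition \ref{prop:shift_by_AP} for the ``if'' direction and on the Appell relation \eqref{equ:appell_relation} for the converse.

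For the ``if'' direction, suppose $\ZZc = (i\xi)_{i \ge 0}$ for some $\xi \in \KK$. I would apply Proposition \ref{prop:shift_by_AP} with the zero grid $\OOc$ in place of the ``$\ZZc$'' appearing there; the grid $\WWc = (0 + i\xi)_{i \ge 0}$ of that proposition then coincides with our $\ZZc$, and it follows that $\TTc$ equals the generalized \gonc basis associated with $(E_\xi \dd, \OOc)$. A quick check on $D$-indicators shows $E_\xi \dd$ to be a delta operator (its indicator $e^{\xi t} q(t)$ vanishes at $t=0$ with derivative $q'(0) \ne 0$, where $q$ is the $D$-indicator of $\dd$), so Proposition \ref{prop:basic_sequence_only_if_generalized_goncarov} identifies $\TTc$ with the basic sequence of $E_\xi \dd$; such a sequence is of binomial type by construction.

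For the converse, assume $\TTc$ is of binomial type. The first move is to change variables $u = d(t)$ in \eqref{equ:appell_relation} so that it reads
\begin{equation*}
e^{xu} = \sum_{n \ge 0} \frac{t_n(x)}{n!} e^{z_n u} q(u)^n,
\end{equation*}
where $q = d^{-1}$ is the $D$-indicator of $\dd$. I would then multiply this identity by its counterpart in a second variable $y$: the left-hand side becomes $e^{(x+y)u}$, which by Appell again expands as $\sum_n \frac{t_n(x+y)}{n!} e^{z_n u} q(u)^n$. Invoking the binomial type relation for $t_n(x+y)$ and reindexing $(n,k) \mapsto (k, n-k)$, both expansions take the form $\sum_{k,l \ge 0} \frac{t_k(x) t_l(y)}{k! l!} q(u)^{k+l} e^{\alpha_{k,l} u}$, with $\alpha_{k,l} = z_k + z_l$ on the product side and $\alpha_{k,l} = z_{k+l}$ on the Appell side.

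Since $\deg t_m = m$, the products $\{t_k(x) t_l(y)\}_{k,l \ge 0}$ form a $\KK$-basis of $\KK[x,y]$, which legitimizes comparing coefficients in $\KK \llb u \rrb$. As $q(u)^{k+l}$ is a nonzero element of the integral domain $\KK \llb u \rrb$, it may be cancelled to yield $e^{(z_k + z_l) u} = e^{z_{k+l} u}$, and hence $z_{k+l} = z_k + z_l$ for all $k, l \in \NNb$. This Cauchy-type relation on $\NNb$ forces $z_0 = 0$ and $z_n = n z_1$, exhibiting $\ZZc$ as an arithmetic grid with initial term $0$. I expect the one nontrivial point will be the bookkeeping behind the coefficient comparison in $\KK[x,y] \otimes_\KK \KK \llb u \rrb$; once the degree observation legitimizes the basis claim, everything else reduces to a routine computation with formal power series.
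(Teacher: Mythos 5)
Your proof is correct, and while the ``if'' direction coincides with the paper's (both reduce to Proposition \ref{prop:shift_by_AP} and the identification of the basis for $(E_\xi\dd,\OOc)$ with the basic sequence of $E_\xi\dd$), your ``only if'' direction takes a genuinely different route. The paper starts from the Rota--Kahaner--Odlyzko generating-function characterization $e^{xf(t)}=\sum_n t_n(x)t^n/n!$ of binomial sequences, introduces the compositional inverse $h$ of $f$, substitutes into the Appell relation \eqref{equ:appell_relation} to get $(h(t))^n=e^{z_nt}(d^{-1}(t))^n$, and then takes the ratio of consecutive instances to see that $z_{n+1}-z_n$ is constant, with $n=0$ giving $z_0=0$. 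You instead work directly from the convolution identity $t_n(x+y)=\sum_k\binom{n}{k}t_k(x)t_{n-k}(y)$: multiplying the Appell relation in $x$ by the one in $y$ and expanding $e^{(x+y)u}$ a second time via Appell plus binomiality, then comparing coefficients in the basis $\{t_k(x)t_l(y)\}$ of $\KK[x,y]$ and cancelling $q(u)^{k+l}$, you land on the additive Cauchy equation $z_{k+l}=z_k+z_l$ on $\NNb$, which immediately yields $z_0=0$ and $z_n=nz_1$. Your version buys some self-containment (no appeal to the exponential generating function theorem for binomial sequences, no compositional inverse $h$, no ratio of power series) at the cost of the bookkeeping you flag, which does go through: only finitely many $(k,l)$ contribute to each power of $u$ because $q(u)^{k+l}$ has order $k+l$, so the coefficientwise comparison in $\KK[x,y]\llb u\rrb$ is legitimate. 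The paper's version is shorter on the page but leans on an external characterization; yours makes the arithmetic-progression conclusion appear for the structural reason one would hope for, namely additivity of the nodes.
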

\begin{proof}
Suppose first that $\TTc$ is a sequence of binomial type. We get by \cite[Section 3, Corollary 3]{RoKaOd73}
that there is a formal power series $f \in \KK\llb t \rrb$ with $f(0) = 0$ and $f^\prime(0) \ne 0$ such that
\begin{equation}
\label{equ:generating_function_for_sequences_of_binomial_type}
e^{x f(t)} = \sum_{n=0}^\infty \frac{t_n(x)}{n!}  t^n.
\end{equation}
On the other hand, we have from Proposition \ref{prop:appell_relation} that
\begin{equation}
\label{equ:exponential_generating_function_for_GGP}
e^{x \fixed[0.15]{ \text{ }} d(t)} = \sum_{n \ge 0} \frac{1}{n!} t_n(x)\fixed[0.15]{ \text{ }} e^{z_n d(t)} t^n,
\end{equation}
where $d(t)$ is the compositional inverse of the $D$-indicator of the delta operator $\dd$ in $\KK\llb t \rrb$.

Let $h \in \KK \llb t \rrb$ be the compositional inverse of $f$, which exists by the assumption that
$f(0) = 0$ and $f^\prime(0) \ne 0$ \cite{Stan2}.  Then $h(f(t)) = f(h(t)) = t$.
Using the change of variable $y \mapsto h(d(t))$ in \eqref{equ:generating_function_for_sequences_of_binomial_type} yields that
$$
e^{x d(t)} = \sum_{n=0}^\infty \frac{1}{n!} t_n(x)\fixed[0.15]{ \text{ }}(h(d(t)))^n.
$$
Comparing this with \eqref{equ:exponential_generating_function_for_GGP} implies, for all $n \in \NNb$, that
$$
(h(d(y)))^n = e^{z_n d(y)} y^n,
$$
which holds as an identity between formal power series in $\KK \llb t \rrb$, and is in turn possible, by the
further change of variables $t \mapsto d^{-1}(t)$, if and only if
\begin{equation}
\label{equ:comparing_coefficients}
(h(t))^n = e^{z_n t} (d^{-1}(t))^n.
\end{equation}
Combining with $(h(t))^{n+1} = e^{z_{n+1} t} (d^{-1}(t))^{n+1}$ for all $n \in \NNb$,  we find that
$$
h(t) = \frac{(h(t))^{n+1}}{(h(t))^n} = e^{(z_{n+1} - z_n)t} d^{-1}(t).
$$
It follows that $z_{n+1} - z_n$ is a constant independent of $n$, viz. there exists $b \in K$ such that
$z_{n+1} - z_n = b$ for all $n \in \NNb$. Then  $z_n = z_0 + nb$ for all $n \in \NNb$. But
evaluating \eqref{equ:comparing_coefficients} at $n = 0$ gives $1 = e^{z_0 t}$, which implies
$z_0 = 0$. Thus $\ZZc$ is an arithmetic grid of initial term $0$.

As for the converse, assume now that $\ZZc$ is an arithmetic grid of common difference $b \in \KK$ and initial
term $0$. Then  $\TTc$ is, by Proposition \ref{prop:shift_by_AP}, the generalized
\gonc basis associated with the pair $(E_b \fixed[0.15]{ \text{ }} \dd, \OOc)$, and hence by Corollary
\ref{cor:characterization_of_binomial_sequences}  is a sequence of binomial type.
\end{proof}

\section{A combinatorial formula for generalized \texorpdfstring{Gon\v{c}arov}{Goncarov} polynomials}
\label{sec:coefficients_of_GGPs}
Let $\dd$ be a delta operator and $\ZZc$ a $\KK$-grid. Assume  $\TTc = (t_n(x))_{n \ge 0}$ is the generalized
\gonc basis associated with the pair $(\dd, \ZZc)$. The main purpose of this  section is to provide a combinatorial
interpretation of the coefficients  of  $t_n(x)$.    By Eq.~\eqref{equ:zero} it is sufficient to consider only
the constant terms.  We will give an explicit
combinatorial formula of $t_n(0)$ as a summation of ordered partitions.

 Given a finite set $S$ with $n$ elements, an \textit{ordered partition},
or \textit{preferential arrangement}, of $S$ is an ordered list $(B_1, \ldots, B_k)$ of disjoint nonempty
subsets of $S$ such that $B_1\cup \cdots \cup B_k=S$.

If $\rho = (B_1,\ldots,B_k)$ is an ordered
partition of $S$, then we set $|\rho |= k$.  For every $i = 0, 1, \ldots, k$ we let $b_i := b_i(\rho) := |B_i|$
and $s_i := s_i(\rho) := \sum_{j=1}^i b_i$.  In particular, $s_0(\rho) = 0$.

Let $\mathcal{R}[n]$ be the set of all ordered partitions of the set $[n]:=\{ 1, 2, \ldots, n\}$.
It is shown in \cite[Theorem 4.2]{KuYan} that the constant coefficient for $g_n(x; \ZZc) $, the classical \gonc polynomial associated to $(D, \ZZc)$,
can be expressed as
\begin{eqnarray} \label{classical-GP}
 g_n(0; z_0, \ldots, z_{n-1}) = \sum_{\rho} (-1)^{|\rho|} \prod_{i=0}^{k-1} z_{s_i}^{b_{i+1}}
 = \sum_{\rho \in \mathcal{R}[n]} (-1)^{|\rho|} z_0^{b_1} \cdots z_{s_{k-1}}^{b_k}.
\end{eqnarray}
A similar formula holds for the  generalized \gonc polynomials associated to the pair $(\dd, \ZZc)$.

\begin{theorem} \label{thm:constant}
 Let $(t_n(x))_{n \geq 0}$ be the generalized \gonc basis associated with the pair $(\dd,\ZZc)$,
 and $(p_n(x))_{n\geq 0}$ be the sequence of basic polynomials
 of $\dd$. Then for $n \geq 1$,
 \begin{eqnarray} \label{constant-coefficient}
  t_n(0) =  \sum_{\rho \in \mathcal R[n]} (-1)^{|\rho|} \prod_{i=0}^{k-1} p_{b_{i+1}}(z_{s_i})
    = \sum_{\rho \in \mathcal R[n]} (-1)^{|\rho|} p_{b_1}(z_0) \cdots p_{b_k}(z_{s_{k-1}}).
 \end{eqnarray}
\end{theorem}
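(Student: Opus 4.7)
The plan is to prove the identity by induction on $n$, showing that the right-hand side (which I denote by $C(n)$) satisfies the same linear recurrence as $t_n(0)$. Specializing Proposition~\ref{prop:recursive_relation} at $x = 0$ and using that $p_j(0) = 0$ for every $j \ge 1$ yields
$$t_n(0) = -\sum_{i=0}^{n-1} \binom{n}{i}\, p_{n-i}(z_i)\, t_i(0) \qquad (n \ge 1),$$
together with $t_0(0) = 1$. These data determine $(t_n(0))_{n\ge 0}$ uniquely, so it suffices to verify the same recursion and the same initial value for $(C(n))_{n\ge 0}$. The base case is immediate under the convention that $\mathcal{R}[0]$ consists only of the empty ordered partition, for which the product in \eqref{constant-coefficient} is empty, giving $C(0) = 1$.

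For the inductive step, the crucial observation is that the summand $(-1)^{|\rho|}\prod_{i=0}^{k-1} p_{b_{i+1}}(z_{s_i})$ depends on $\rho = (B_1, \ldots, B_k) \in \mathcal{R}[n]$ only through the composition of block sizes $(b_1, \ldots, b_k)$. I would therefore sort the sum by the size $j = b_k$ of the last block. There are $\binom{n}{j}$ ways to choose $B_k$ as a $j$-element subset of $[n]$; the truncated tuple $(B_1, \ldots, B_{k-1})$ then ranges over ordered partitions of the $(n-j)$-element set $[n]\setminus B_k$, and the part of the product indexed by $0 \le i \le k-2$ coincides with the summand defining $C(n-j)$. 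Since $s_{k-1} = n-j$, the last factor is $p_j(z_{n-j})$, and removing $B_k$ decreases $|\rho|$ by one, contributing an extra sign. Assembling these observations, I expect to obtain
$$C(n) = -p_n(z_0) - \sum_{j=1}^{n-1} \binom{n}{j}\, p_j(z_{n-j})\, C(n-j),$$
where the isolated term $-p_n(z_0)$ corresponds to the single-block case $k = 1$, $j = n$.

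The change of variable $i = n-j$ (with $\binom{n}{n-i}=\binom{n}{i}$) rewrites the sum as $-\sum_{i=1}^{n-1}\binom{n}{i}\,p_{n-i}(z_i)\,C(i)$, while the stray term $-p_n(z_0) = -\binom{n}{0} p_n(z_0)\, C(0)$ is absorbed as the $i=0$ summand. The resulting formula is precisely the recursion displayed above for $t_n(0)$, and so by induction $C(n) = t_n(0)$ for all $n \ge 0$, which is \eqref{constant-coefficient}. The only delicate point is the combinatorial reindexing that peels off the last block, but once one notices that the summand depends only on the composition $(b_1, \ldots, b_k)$, this step is routine and presents no genuine obstacle.
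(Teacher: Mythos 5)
Your proposal is correct and follows essentially the same route as the paper's proof: both derive the recurrence $t_n(0) = -\sum_{i=0}^{n-1}\binom{n}{i}p_{n-i}(z_i)t_i(0)$ from Proposition~\ref{prop:recursive_relation} at $x=0$, and both verify it for the combinatorial sum by peeling off the last block of the ordered partition. Your explicit remark that the summand depends only on the composition of block sizes is a point the paper leaves implicit, but the argument is otherwise identical.
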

\begin{proof}
 Using Proposition \ref{prop:recursive_relation}  and noting that $p_n(0)=0$ for $n \geq 1$, we have
 \begin{eqnarray} \label{equ:constant-recurrence}
  t_n(0) = -\sum_{i=0}^{n-1} \binom{n}{i} p_{n-i}(z_i) t_i(0).
\end{eqnarray}
Denote by $\mathcal{T}(n)$, for $n \geq 1$, the  right-hand side of Eq.~\eqref{constant-coefficient}, and let $\mathcal{T}(0)=1$, which agrees with $t_0(0)$. We show by induction that $(\mathcal{T} (n))_{n\geq 0}$ satisfies the same
recurrence as \eqref{equ:constant-recurrence}, i.e.,
\begin{eqnarray} \label{sum-of-OP}
  \mathcal{T}(n) = -\sum_{i=0}^{n-1} \binom{n}{i} p_{n-i}(z_i) \mathcal{T}(i)
\end{eqnarray}
To see this, we divide $\mathcal{R}[n]$ into disjoint subsets $\mathcal{R}[n,i]$, where
$$
\mathcal{R}[n,i] := \{(B_1, \ldots,  B_k)  \in \mathcal{R}[n]: |B_k|=n-i\} \qquad
\text{ for }  i=0, 1, \ldots, n-1.
$$
Given $\rho \in \mathcal{R}[n,i]$ with a fixed last block $B_k$, we can write $\rho$ as the concatenation of $B_k$ and an ordered partition of a set with $i$ elements. So we get from the inductive hypothesis that
$$
 \sum_{\rho^\prime \in \mathcal{R}[i]} (-1)^{|\rho^\prime|} \prod_{i=0}^{k-2} p_{b_{i+1}}(z_{s_i}) = \mathcal{T}(i).
$$
Since there are $\binom{n}{i}$ ways to choose the elements of $B_k$, it follows that
the total contribution of ordered partitions in $\mathcal{R}[n,i]$ to $\mathcal{T}(n)$ is
$$
 -\binom{n}{i} p_{n-i}(z_i) \mathcal{T}(i).
$$
Then, summing over $i=0, 1, \ldots, n-1$ proves the desired recurrence \eqref{sum-of-OP}.
\end{proof}
One way of obtaining $t_n(x\fixed[0.15]{ \text{ }}; \dd, \ZZc)$ is to use the shift-invariance  property (Proposition
\ref{prop:invariance_under_translation}) as to write $t_n(x\fixed[0.15]{ \text{ }}; \dd, \ZZc) = t_n(0\fixed[0.15]{ \text{ }}; \dd, \ZZc-x)$. Another
way is to use Eq.~\eqref{equ:zero}:
$$
 t_n(x\fixed[0.15]{ \text{ }}; \dd, \ZZc) = \sum_{i=0}^n \binom{n}{i} t_{n-i}(0; \dd, \ZZc^{(i)}) p_i(x),
$$
and apply Theorem \ref{thm:constant} to each $t_{n-i}(0; \dd, \ZZc^{(i)})$.
Comparing this with the analogous equation for classical \gonc polynomials:
$$
 g_n(x\fixed[0.15]{ \text{ }}; \ZZc) = \sum_{i=0}^n \binom{n}{i}  g_{n-i}(0\fixed[0.15]{ \text{ }}; z_i, \ldots, z_{n-1})\fixed[0.15]{ \text{ }} x^i,
$$
we notice that $t_n(x\fixed[0.15]{ \text{ }}; \dd, \ZZc)$ can be obtained from $g_n(x; \ZZc)$ by replacing $x^i$
with $p_i(x)$  and $z_k^i$ by $p_i(z_k)$. For example, we have the following formulas, which the reader may want to compare with the ones for $g_n(x\fixed[0.15]{ \text{ }}; \ZZc)$ in \cite[p. 23]{KuYan}:
\begin{equation*}
\begin{split}
 t_0(x\fixed[0.15]{ \text{ }}; \dd, \ZZc) &=  1 ,\\
 t_1(x\fixed[0.15]{ \text{ }}; \dd, \ZZc) & = p_1(x)-p_1(z_0), \\
 t_2(x\fixed[0.15]{ \text{ }};\dd, \ZZc) & =  p_2(x)-2 p_1(z_1)p_1(x)+ 2 p_1(z_0)p_1(z_1)- p_2(z_0), \\
 t_3(x\fixed[0.15]{ \text{ }}; \dd, \ZZc) & =  p_3(x)-3p_1(z_2) p_2(x) + (6 p_1(z_1)p_1(z_2)-3p_2(z_1)) p_1(x)  \\
     &  - p_3(z_0) +3 p_2(z_0)p_1(z_2)-6 p_1(z_0) p_1(z_1)p_1(z_2) +3 p_1(z_0) p_2(z_1).
\end{split}
\end{equation*}
For a generic delta operator $\dd$ and an arbitrary grid $\ZZc$, the generalized \gonc polynomials  do not
usually have a simple closed formula. However, an interesting exception to this ``rule'' occurs when $\ZZc$ is an arithmetic progression with $z_i=a+bi$, in which case we refer to the corresponding generalized \gonc polynomials as $\dd$-Abel polynomials.
In fact, $\dd$-Abel polynomials can be expressed
in terms of the basic polynomials of $\dd$, as implied by the following:
\begin{theorem}\label{thm:delta_Abel}
 Let $\dd$ be a delta operator with basic sequence $(p_n(x))_{n \geq 0}$, and let $\ZZc$ be the
arithmetic grid $(a+bi)_{i \ge 0}$, where $a, b \in \KK$. Then
 the $\dd$-Abel polynomial $t_n(x;\dd, \ZZc)$ can be obtained by
 \begin{eqnarray} \label{delta-abel}
  t_n(x\fixed[0.15]{ \text{ }}; \dd, \ZZc) = \frac{(x-a)\fixed[0.2]{ \text{ }}  p_n(x-a-nb)} {x-a-nb}.
 \end{eqnarray}
\end{theorem}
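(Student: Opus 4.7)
The plan is to reduce the identity to a statement about basic sequences of delta operators and then verify that reduced statement via generating functions.

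First, by the translation-invariance of generalized \gonc bases (Proposition \ref{prop:invariance_under_translation}), it suffices to treat the case $a=0$. Indeed, if $(h_n(x))_{n\geq 0}$ denotes the generalized \gonc basis associated with $(\dd,(bi)_{i\geq 0})$, that proposition yields $t_n(x\fixed[0.15]{ \text{ }};\dd,\ZZc) = h_n(x-a)$, so it is enough to show $h_n(x) = x\fixed[0.15]{ \text{ }} p_n(x-nb)/(x-nb)$.

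Second, I would apply Proposition \ref{prop:shift_by_AP} with $\ZZc=\OOc$ and $\xi=b$: the $\xi$-shifted grid is exactly $(bi)_{i\geq 0}$, so $(h_n)$ is the generalized \gonc basis associated with $(E_b\fixed[0.15]{ \text{ }}\dd,\OOc)$. By Proposition \ref{prop:basic_sequence_only_if_generalized_goncarov} (equivalently, by Corollary \ref{cor:characterization_of_binomial_sequences}), this basis coincides with the basic sequence $(\tilde p_n)$ of the delta operator $E_b\fixed[0.15]{ \text{ }}\dd$. The theorem therefore reduces to the identity $\tilde p_n(x) = x\fixed[0.15]{ \text{ }} p_n(x-nb)/(x-nb)$.

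For the reduced statement I would use the generating-function formula \eqref{eq:EGF}. Writing $q$ for the $D$-indicator of $\dd$, the $D$-indicator of $E_b\fixed[0.15]{ \text{ }}\dd$ is $r(t)=e^{bt}q(t)$, and Lagrange inversion applied to the substitutions $s=r(t)$ and $u=q(t)$, respectively, shows that both $\tilde p_n(x)$ (via the EGF of the basic sequence of $E_b\fixed[0.15]{ \text{ }}\dd$) and $x\fixed[0.15]{ \text{ }} p_n(x-nb)/(x-nb)$ (via the EGF of $(p_n)$ evaluated at $x-nb$, multiplied by $x$) reduce to the same explicit coefficient
\[
(n-1)!\fixed[0.15]{ \text{ }} x\fixed[0.15]{ \text{ }} [t^{n-1}]\fixed[0.15]{ \text{ }} e^{(x-nb)t}\,(t/q(t))^n.
\]

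This Lagrange-inversion computation is the main technical ingredient. Conceptually it is a specialization of the classical transfer formula of finite operator calculus (see \cite{RoKaOd73}), which asserts that, for an invertible shift-invariant operator $U$, the basic sequence of $U\fixed[0.15]{ \text{ }}\dd$ is $x\fixed[0.15]{ \text{ }} U^{-n}(p_n(x)/x)$; taking $U=E_b$, so that $U^{-n}=E_{-nb}$, yields our claim at once. An alternative would be to verify the three axioms of a basic sequence for $g_n(x) := x\fixed[0.15]{ \text{ }} p_n(x-nb)/(x-nb)$ by hand: the degree condition and $g_n(0)=0$ are immediate from $p_n(0)=0$, but the recursion $E_b\fixed[0.15]{ \text{ }}\dd\fixed[0.15]{ \text{ }} g_n = n\fixed[0.15]{ \text{ }} g_{n-1}$ requires a Pincherle-derivative manipulation of comparable effort to the Lagrange-inversion approach.
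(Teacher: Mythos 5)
Your proposal is correct and follows essentially the same route as the paper: reduce to the grid $(bi)_{i\ge 0}$ by translation invariance, identify the resulting basis with the basic sequence of $E_b\,\dd$ via Proposition \ref{prop:shift_by_AP} and Corollary \ref{cor:characterization_of_binomial_sequences}, and finish with the Mullin--Rota transfer formula $q_n(x)=x\,E_{-nb}\bigl(p_n(x)/x\bigr)$. The only difference is presentational: the paper simply cites \cite[Theorem 4(3)]{MuRo70} twice and combines, whereas you offer to rederive that step by Lagrange inversion.
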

\begin{proof}
 By the shift-invariance formula we have  $t_n(x; \dd, \ZZc) = t_n(x-a; \dd,  (bi)_{i\geq 0})$. Using
 Proposition \ref{prop:shift_by_AP}, the generalized \gonc polynomials associated to
 $(\dd, \WWc)$ with $w_i = bi$ is also the generalized \gonc polynomials associated to $(E_b \dd, \OOc)$.
 Let $q_n(x)$ be the basic
 sequence of $E_b \dd$. Hence we have $t_n(x-a; \dd, (bi)_{i \ge 0} )
 = q_n(x-a)$.

Now, we have from \cite[Theorem 4(3)]{MuRo70}, along with the fact that shift-invariant operators commute with each other, that
\begin{equation}\label{equ:mullin-rota_th4(3)}
q_n(x) = x \fixed[0.15]{ \text{ }} (E_b \dd)^{-n}(x^n) = x \fixed[0.15]{ \text{ }} E_{-nb} \dd^{-n}(x^n).
\end{equation}
On the other hand, a further application of \cite[Theorem 4(3)]{MuRo70} yields that $p_n(x) = x \dd^{-n}(x^n)$, and hence $\dd^{-1}(x^n) = p_n(x)/x$. Together with \eqref{equ:mullin-rota_th4(3)}, this in turn implies that
$$
q_n(x) = x E_{-nb} \!\left(\frac{p_n(x)}{x}\right) \!= \frac{x \fixed[0.2]{ \text{ }} p_n(x-nb)}{x-nb}.
$$
So putting it all together, Eq.~\eqref{delta-abel} follows immediately.
\end{proof}
We note that Niederhausen has also obtained formula \eqref{delta-abel} in \cite{Nied}, but with other means; he calls the procedure $\dd \mapsto E_a \dd$, for a fixed $a \in \KK$, the abelization of the delta operator $\dd$.

Bivariate extensions of $\dd$-Abel polynomials, which are solutions to a multivariate
 \gonc Interpolation problem with respect to an affine grid, are further studied and characterized in \cite{LY15}
 for $\dd=D$, and in \cite{LSY} for general delta operators.


\section{Reluctant functions and order statistics}

In the classical paper \emph{Finite Operator Calculus}, Rota, Kahaner, and
Odlyzko presented a unified theory of special polynomials via operator methods.
One open question arose from this algebraic theory is to find the ``statistical, probabilistic and
combinatorial interpretations of the identities" (of the polynomials),
see  Problem 5 of Section 14, \cite{RoKaOd73}.
For polynomial sequences of binomial type, Mullin and Rota  \cite{MuRo70} provided
a combinatorial interpretation through counting binomial type structures such as
\emph{reluctant functions}.
The  ideas of binomial enumeration and reluctant functions
also provide a  combinatorial setting for generalized \gonc polynomials: We
show in this section that generalized \gonc polynomials are the natural polynomial basis for counting
the number of  binomial type structures subject to a linear constraint on their order statistics.

To start, let $S$ and $X$ be finite disjoint sets, and $f: S \to S \cup X$ a function. We say that $f$
is a \textit{reluctant function} from $S$ to $X$ if, for every $s \in S$, there is a  positive integer
$k=k(s)$ such that $\iter{f}{k}(s) \in X$, in which case we refer to $\iter{f}{k}(s)$ as the \textit{final image}
of $s$ (under $f$).
It is easy to see that for any given $s$, the integer  $k(s)$, if it exists, is unique.

Accordingly, we take the \textit{final range} of $f$, here denoted by $\uimag(f)$, to be the set of all $\xi \in X$
such that $\xi$ is the final image of some $s \in S$. Given $\xi \in \uimag(f)$, we let the \textit{final inverse image}
of $\xi$, which we write as $f^{(-1)}(\xi)$, be the set of all the elements in $S$ whose final image is $\xi$.

From a combinatorial point of view, the final inverse image of an element $\xi \in X$ can be regarded in a
canonical way as a rooted forest: The nodes are just the elements of $S$, the roots are the elements of the
inverse image $f^{-1}(\xi)=\{s \in S: \ f(s) =\xi\}$.
In a rooted tree we say that a vertex is of depth $k$ if the unique path from $u$ to the root contains
$k$ edges. The root itself is of depth $0$.
Then for each $s_0 \in f^{-1}(\xi)$ and $k \in \NNb$ the vertices of depth $k$ in a tree  of $f^{(-1)}(\xi)$ rooted at $s_0$
are those elements $s \in S$ such that $\iter{f}{k}(s) = s_0$ and
hence $\iter{f}{k+1}(s)=\xi$.

The \textit{final coimage} of $f$ is the partition $\{f^{(-1)}(\xi): \xi \in \uimag(f)\}$ of $S$.
 Based on the above discussion the final coimage  carries over a natural structure, $T_f$, of a rooted forest
 defined on each block of the partition. Furthermore, each block of the final coimage can be further partitioned
 into  connected components (relative to $T_f$); the resulting partition is a refinement of the final coimage
 and has the additional property that each block has the structure of a rooted tree. This finer partition
 together with the rooted tree structure  is called the \textit{final preimage} of the reluctant function $f$.

\begin{remark}
What we call ``final range'', ``final coimage'', and ``final preimage'' of a reluctant function are actually
called ``range'', ``coimage'', and ``preimage'' by Mullin and Rota in \cite{MuRo70}. However, these latter
terms are already used in the everyday practice of mathematics with a different meaning.  We add the word ``final''
to avoid potential
misunderstanding.
\end{remark}
A \emph{binomial class} $\mathcal{B}$ of reluctant functions is defined as follows. To  every pair of
finite sets $S$ and  $X$ we assign a set
$F(S, X)$ of reluctant functions from $S$ to $X$, where $F(S, X)$ is isomorphic to $F(S', X')$ whenever $S$
is isomorphic to $S$ and
$X$ is isomorphic to $X'$.  Consequently, the size of $F(S, X)$ depends only on the
sizes of $S$ and $X$, but not the content of these sets.  Let $X \oplus Y$ stand for the disjoint union of $X$
and $Y$. For every reluctant function $f$ from $S$ to $X \oplus Y$, let $A=\{s \in S: \text{the final image of } s \text{
is in }X\}$ and $f_A$ is the
restriction of $f$ to $A$.  Similarly $f_{S \setminus A}$ is the restriction of $f$ to the set $S\setminus A$.
 The class $\mathcal{B}$  is a \emph{binomial class} if $f_A \in F(A, X)$, $f_{S\setminus A} \in F(S\setminus A, Y)$ and
 the above decomposition
 leads to a  natural isomorphism
\begin{equation} \label{BT-set}
\mu:  F(S, X \oplus Y) \rightarrow \bigcupdot_{A \subseteq S} (F(A, X) \otimes F(S \setminus A, Y))
\end{equation}
by letting $\mu(f) := (f_A,  f_{S\setminus A} )$, where $\otimes$ denotes the operation of piecing functions $f_A$ and $f_{S\setminus A}$ together,
and $\cupdot$ a disjoint union.

Let $p_n(x)$  denote the size of the set $F(S, X)$ when $|S|=n$ and $|X|=x$. Then for a binomial class,
$p_n(x)$ is a well-defined  polynomial in the variable $x$ of degree $n$, and the sequence
$(p_n(x))_{n \ge 0} $ is of binomial type:
\[
p_n(x+y) = \sum_{k \geq 0} \binom{n}{k} p_k (x) p_{n-k}(y).
\]

%

The above construction gives a family of polynomial sequences of binomial types with combinatorial
significance--they count the number of reluctant
functions in a binomial class. For example, let $\mathcal{B}$ contains all the reluctant functions for
which each block of the final preimage
is a singleton. Then $p_n(x)=x^n$.  Another example is that $\mathcal{B}$ contains all possible
reluctant functions. In this case
$p_n(x) = x(x+n)^{n-1}$, the Abel polynomial $x(x-an)^{n-1}$ with $ a=-1$.
This result can be proved by using Pr\"{u}fer codes, see e.g. \cite[Prop. 5.3.2]{Stan2}.
 More examples of binomial classes  are discussed in the next section.

For a sequence of numbers $(a_0, \ldots, a_{n-1})$, let $a_{(0)} \leq  \cdots \leq a_{(n)}$ be the non-decreasing
rearrangements of the terms $a_i$. The value of  $a_{(i)}$ is called the $i$-th order statistic of the sequence.
We will combine the notations of reluctant functions and order statistics.
From now on we assume $S=\{s_0, \ldots, s_{n-1}\}$, $x$ is a positive integer,  and $X=\{ 1,  \ldots, x\}$.
Associated with any reluctant function $f$ from $S$ to $X$  a sequence
$\vec{x}=(x_0, \ldots, x_{n-1}) \in X^n$ where $x_i$ is the final
image of $s_i$.

Assume that $z_0 \leq \cdots \leq z_{n-1}$ are integers in $X$. For a binomial class $\mathcal{B}$ of reluctant functions enumerated
by $p_n(x)$, define  $\mathcal{O}rd(z_0, \ldots, z_{n-1})$, \emph{the set of reluctant functions of length $n$
whose order statistics are bounded by $\ZZc$}, by letting
$$
\mathcal{O}rd(z_0, \ldots, z_{n-1}) = \{ f \in F(S, X):  x_{(0)} \leq z_0, \ldots, x_{(n-1)} \leq z_{n-1} \}.
$$
Then we have the following equation about $ord(z_0, \ldots, z_{i-1}) =
|\mathcal{O}rd(z_0, \ldots, z_{n-1}) |.$
\begin{theorem} \label{thm:OS}
With the notation as above, it holds that, for every $n \in \NNb$,
 \begin{eqnarray}\label{OS-recurrence}
    p_n(x) = \sum_{i=0}^n \binom{n}{i} p_{n-i}(x-z_i)  \cdot ord(z_0,  \ldots, z_{i-1}).
 \end{eqnarray}
\end{theorem}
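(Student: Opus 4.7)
The plan is to prove the identity by a combinatorial partition argument. Fix a positive integer $x \ge z_{n-1}$ and interpret both sides as counting $F(S,X)$ with $|S|=n$ and $|X|=x$; since both sides are polynomials of degree at most $n$ in $x$ and will agree on infinitely many integer values, the identity extends to $\KK[x]$.

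First, I would attach to each $f \in F(S,X)$, with sorted final images $x_{(0)} \le x_{(1)} \le \cdots \le x_{(n-1)}$, a canonical integer $i(f) \in \{0,1,\ldots,n\}$ characterized as the unique value for which $x_{(k)} \le z_k$ whenever $k < i(f)$ and, if $i(f) < n$, also $x_{(i(f))} > z_{i(f)}$. Existence follows from the greedy sweep $i = 0, 1, 2, \ldots$ (incrementing while $x_{(i)} \le z_i$, stopping at the first failure or at $i=n$). For uniqueness, two qualified values $i < i'$ would yield $x_{(i)} > z_i$ from the qualification of $i$ and $x_{(i)} \le z_i$ from the qualification of $i'$, a contradiction. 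This partitions $F(S,X) = \bigcupdot_{i=0}^n F_i$ with $F_i := \{f : i(f) = i\}$.

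Next, I would compute $|F_i|$ via the binomial class isomorphism \eqref{BT-set} with the splitting $X = X_1 \oplus X_2$, $X_1 = \{1,\ldots,z_i\}$, $X_2 = \{z_i+1,\ldots,x\}$; the case $i=n$ is harmless since $p_0 \equiv 1$ makes the formula independent of a hypothetical $z_n$. Setting $A(f) := \{s \in S : \text{the final image of } s \text{ under } f \text{ is } \le z_i\}$, the inequalities $x_{(k)} \le z_k \le z_i$ for $k < i$ together with $x_{(k)} \ge x_{(i)} > z_i$ for $k \ge i$ force $|A(f)| = i$ whenever $i(f) = i$. Under \eqref{BT-set}, $f$ corresponds to the triple $(A(f), f_{A(f)}, f_{S \setminus A(f)})$; by construction $f_{A(f)}$ has order statistics $x_{(0)},\ldots,x_{(i-1)}$ bounded by $z_0,\ldots,z_{i-1}$, so it lies in $\mathcal{O}rd(z_0,\ldots,z_{i-1})$ as a length-$i$ reluctant function, while $f_{S \setminus A(f)}$ is arbitrary in $F(S \setminus A(f), X_2)$. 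Any such triple rebuilds an $f$ with $i(f) = i$. Summing $|F_i| = \binom{n}{i} \cdot \text{ord}(z_0,\ldots,z_{i-1}) \cdot p_{n-i}(x - z_i)$ over $i$ then yields the claim.

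The hard part will be the bijective characterization of $F_i$: verifying that $|A(f)| = i(f)$ and that the isomorphism $\mu$ from \eqref{BT-set} identifies the ``parking portion'' $f_{A(f)}$ precisely with an element of $\mathcal{O}rd(z_0,\ldots,z_{i-1})$ (which in turn requires that the tree structure of the reluctant function respects the splitting, i.e., the $f$-orbit of any element of $A(f)$ stays inside $A(f) \cup X_1$, an automatic consequence of the final image being an orbit invariant). Both reductions hinge on the sortedness of the order statistics together with the monotonicity $z_0 \le z_1 \le \cdots \le z_{n-1}$, so once the canonical decomposition is set up cleanly the rest of the argument is elementary.
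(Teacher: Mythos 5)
Your proof is correct and is essentially the paper's own argument: the canonical index $i(f)$ is exactly the paper's $\kappa(f)$ (the maximal $i$ with $x_{(0)}\le z_0,\ldots,x_{(i-1)}\le z_{i-1}$), and the decomposition via the binomial-class isomorphism \eqref{BT-set} with the split $X_1=\{1,\ldots,z_i\}$, $X_2=\{z_i+1,\ldots,x\}$ is the same, with the restriction to $A$ landing in $\mathcal{O}rd(z_0,\ldots,z_{i-1})$ and the complement unconstrained. Your explicit verification that $|A(f)|=i(f)$ and the uniqueness argument for $i(f)$ are slightly more careful than the paper's write-up, but the route is identical.
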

\begin{proof}
 For any reluctant function $f \in F(S, X)$, let $\kappa(f)$ be the maximal index $i$ such that
 \begin{eqnarray} \label{condition}
x_{(0)} \leq z_0, \ldots, x_{(i-1)} \leq z_{i-1}.
\end{eqnarray}
The maximality of   $i$ implies that
$$
z_i < x_{(i)} \leq x_{(i+1)} \leq \cdots \leq x_{(n-1)} .
$$
Let $X_1$  be the subset of $X$ consisting of $\{1,  \ldots, z_{i} \}$, and
$X_2=X \setminus X_1=\{ z_i+1, \ldots, x\}$.  Assume
$(x_{r_0}, \ldots, x_{r_{i-1}})$  is the subsequence of $\vec{x}$
from which the sequence $(x_{(0)}, \ldots, x_{(i-1)})$  was obtained by rearrangement,
and let $A =\{s_{r_0}, \ldots, s_{r_{i-1}}\}  \subseteq S$.
Then the reluctant function $f$ is obtained by piecing two functions,
$f_A$ and $f_{S \setminus A}$ , the restrictions of $f$ on $A$ and $S\setminus A$,  together.
Since $\mathcal{B}$ is a binomial class, we have that
$f_A \in F(A, X_1), f_{S \setminus A} \in F(S \setminus A, X_2)$.
Furthermore, the function $f_A$ has the property that its order statistics are bounded
by $\ZZc$, and hence belongs to $\mathcal{O}rd(z_0, \ldots, z_{i-1})$.
The function $f_{S\setminus A}$ can be any reluctant function from
$S \setminus A$ to $X_2$.

Conversely, any pair of subsequences as described above can be reassembled into a reluctant
function from $S$ to $X$.
In other word, the decomposition $f \rightarrow (f_A, f_{S\setminus A})$ defines a bijection
from the set $F(S, X)$ to
\begin{eqnarray} \label{equ:union}
\bigcupdot_{i=0}^n  \bigcupdot_{A=\{r_0,  \ldots, r_{i-1}\}}
F_{ord}(A, \{1,\ldots, z_i\}) \otimes F(S\setminus A,  \{z_i+1,\ldots, x\} )
\end{eqnarray}
where $F_{ord}(A, \{1, \ldots, z_i\})$ is the set of reluctant functions from the set
$\{s_i: i \in A\}$  to $\{1, \ldots, z_i\}$  whose order statistics are bounded by $\ZZc$.

Now counting  the number of elements in the disjoint union of \eqref{equ:union}, we
get Eq.~\eqref{OS-recurrence}.
\end{proof}

Comparing Eq.~\eqref{OS-recurrence} with the linear recurrence of Proposition \ref{prop:recursive_relation},
we obtain a combinatorial interpretation of the generalized
\gonc basis.
\begin{theorem} \label{Order-statistics}  
Let $p_n(x)$ count the number of reluctant functions in a binomial class $\mathcal{B}$.
Assume $p_n(x)$ is the  sequence of basic polynomials of the delta operator $\dd$, and $t_n(x; \dd, \ZZc)$
is the $n$th generalized \gonc polynomial associated to the
pair $(\dd, \ZZc)$ with $\ZZc=(z_0, z_1, \ldots )$. Then
 \begin{eqnarray} \label{eq:order_sta}
  ord(z_0, \ldots, z_{n-1}) = t_n(x\fixed[0.15]{ \text{ }}; \dd, ( x-z_i)_{i \ge 0} )
 = t_n(0\fixed[0.15]{ \text{ }}; \dd,  -\ZZc ).
 \end{eqnarray}
 That is, $t_n(0\fixed[0.15]{ \text{ }}; \dd, -\ZZc)$ counts the number of reluctant functions of the binomial class
 $\mathcal{B}$ whose order statistics are bounded by $\ZZc$.
\end{theorem}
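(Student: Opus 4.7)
The plan is to match the combinatorial recursion in Theorem~\ref{thm:OS} against the algebraic one from Proposition~\ref{prop:recursive_relation}, and then upgrade the first equality in \eqref{eq:order_sta} to the second via a single application of shift-invariance.

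First, fix a positive integer $x$ together with integers $z_0, \ldots, z_{n-1} \in \{1, \ldots, x\}$, and form the grid $\WWc = (x - z_i)_{i \ge 0}$ (entries beyond index $n-1$ are irrelevant, since only the first $n$ affect $t_n$). Applying Proposition~\ref{prop:recursive_relation} to the pair $(\dd, \WWc)$ produces a polynomial identity in a fresh variable $y$; evaluating at $y = x$ yields
\[
p_n(x) = \sum_{i=0}^n \binom{n}{i}\, p_{n-i}(x - z_i)\, t_i(x; \dd, \WWc).
\]
Theorem~\ref{thm:OS} supplies \emph{verbatim} the same sum with $t_i(x; \dd, \WWc)$ replaced by $ord(z_0, \ldots, z_{i-1})$. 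Since $p_0 \equiv 1$, the diagonal coefficient $\binom{n}{n}\, p_0(x - z_n) = 1$ is nonzero, so the two lower-triangular linear systems determine their unknowns uniquely; a short induction on $n$, with base case $t_0 = 1 = ord(\emptyset)$, forces $ord(z_0, \ldots, z_{n-1}) = t_n(x; \dd, \WWc)$, which is the first equality in \eqref{eq:order_sta}.

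For the second equality, observe that $\WWc$ is the translation of the grid $-\ZZc := (-z_i)_{i \ge 0}$ by the scalar $\xi = x$. Proposition~\ref{prop:invariance_under_translation} then gives $t_n(y + x; \dd, \WWc) = t_n(y; \dd, -\ZZc)$, and specializing to $y = 0$ delivers $t_n(x; \dd, \WWc) = t_n(0; \dd, -\ZZc)$, completing the chain in \eqref{eq:order_sta}.

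The main subtlety, rather than a genuine obstacle, is bookkeeping with the dual role of $x$: in the expression $t_n(x; \dd, (x-z_i)_{i \ge 0})$ the symbol $x$ serves both as a scalar defining the grid and as the point at which the resulting polynomial is evaluated, so one must freeze $x$ numerically, construct the generalized \gonc basis for that (now purely numerical) grid, and only then substitute $x$ for the polynomial's variable. Once this is kept straight, the matching of the two triangular recurrences and the invocation of translation invariance are essentially mechanical.
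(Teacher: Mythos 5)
Your proof is correct and follows essentially the same route as the paper, which likewise obtains the result by comparing the combinatorial recurrence of Theorem~\ref{thm:OS} with the algebraic recurrence of Proposition~\ref{prop:recursive_relation} and then invoking translation invariance. You have merely made explicit the details the paper leaves implicit (the triangular-system induction with nonzero diagonal entry $p_0=1$, and the careful separation of $x$ as grid parameter versus evaluation point), all of which are handled correctly.
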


\section{Examples}
\label{sec:examples}
In this section we give some examples of sequences of polynomials of binomial type that enumerate binomial
classes. In each case, we describe the delta operator, the associated generalized \gonc polynomials, and their combinatorial
significance. We also compute $\dd$-Abel polynomials.

In  \cite{MuRo70}  Mullin and Rota introduced two important families of binomial classes of reluctant functions.
The first one is class $\mathcal{B}(T)$
where $T$ is a family of rooted trees. The class $\mathcal{B}(T)$ consists of all reluctant functions whose
final preimages are labeled rooted forests on
$S$ each of whose components is isomorphic to a rooted tree in the family $T$. Examples 1--5 belong to this family.
The second family of binomial classes is formed by taking a subclass of $\mathcal{B}(T)$: one only
allows those reluctant
functions in $\mathcal{B}(T)$ having the property that their final coimage coincides with their final preimage.
In other words, each rooted tree
in the final preimage is mapped to a distinct element in $X$. Such a subclass, denoted by $\mathcal{B}_m(T)$,
is called the \emph{monomorphic class associated to $\mathcal{B}(T)$} and is ultimately a generalization of the notion
of injective function. Example 6 and 7 belong to the monomorphic family.

The combinatorial interpretation of generalized \gonc polynomials are closely related to vector-parking functions.
Hence we recall the necessary notations on parking functions.
More  results and theories of parking functions can be found in the survey
paper \cite{Yan14}.
Let $\mathbf{u}=(u_i)_{i \geq 1}$ be a sequence of non-decreasing
positive integers. A $\mathbf{u}$-parking function of length $n$ is a sequence $(x_1, \ldots, x_n)$ of positive
integers whose order
statistics satisfy $x_{(i)} \leq u_i$. When $u_i=i$, we get the classical parking functions, which was originally
introduced by Konheim and Weiss \cite{KW66}.  Classical parking functions have a ``parking description'' as follows.
\begin{quotation}
There are $n$ cars $C_1, \ldots, C_n$ that want to park on
a one-way street with ordered parking spaces $1, \ldots, n$.
Each car $C_i$ has a preferred space $a_i$. The cars enter the street one at a time in the
order $C_1, \ldots, C_n$. A car tries to park in its preferred space. If that space
is occupied, then it parks in the next available space. If there is no space then the car leaves the
street (without parking). The sequence $(a_1, \ldots, a_n)$ is called a \emph{parking function  of length $n$}
if all the cars can park, i.e., no car leaves the street.
\end{quotation}
An equivalent definition for classical parking functions is that at least $i$ cars prefer the parking spaces
of labels $i$ or less.
Similarly a $\mathbf{u}$-parking function of length $n$, where $\mathbf{u} = (u_1, \ldots, u_n)$ is a vector of
positive integers, can be viewed as a parking preference sequence in which at least $i$ cars prefer the
parking spaces of labels $\le u_i$ (out of a total of $x \geq u_n$ parking spaces).

As explained in the following examples, the classical \gonc polynomial associated to the differential operator $D$ enumerate
$\mathbf{u}$-parking functions, while generalized \gonc polynomials associated to other delta operators
for binomial class $\mathcal{B}(T)$
enumerate variant forms of the parking scheme, in which the cars arrive in groups with certain special structures,
cars in the same group have the same preferred space, and there are at least $i$ cars preferring spaces of label $u_i $ or less.
Similarly, the generalized \gonc polynomials associated to other delta operators for the monomorphic
class $\mathcal{B}_m(T)$ enumerate those with the additional property that different groups have different preferences.
\subsection{The standard power polynomials}
\label{sec:standard}
The  sequence $(x^n)_{n \geq 0}$ is the basic polynomials of the differential operator $D$.
It enumerates the binomial class $\mathcal{B}(T_0)$, where $T_0$ consists of a single tree with only one vertex
(viz., an isolated vertex which is also the root of the tree).

A reluctant function in this class is just a usual function from $S$ to $X$, which can be represented by the sequence
$\vec{x}=(f(s_1),  \ldots, f(s_n))$.   \gonc polynomials $g_n(x\fixed[0.15]{ \text{ }}; \ZZc)$ associated to $(D, \ZZc)$ are  the
classical ones studied in
\cite{KuYan}, for which $(-1)^ng_n(0; \ZZc)$  enumerates the number of $\mathbf{z}$-parking functions
of length $n$, \cite[Theorem 5.4]{KuYan}.

If $\ZZc$ is the arithmetic progression $z_i=a+bi$,  then $g_n(x\fixed[0.15]{ \text{ }}; \ZZc)$ is  a shift of the  classical Abel polynomials, or more explicitly,
$$
 g_n(x\fixed[0.15]{ \text{ }}; (a+bi)_{i \ge 0})= (x-a)(x-a-nb)^{n-1}.
$$
It follows that  $P_n(a, a+b, \ldots, a+(n-1)b) = a(a+nb)^{n-1}$, where $P_n(z_0, \ldots, z_{n-1})$ is the number
of $\mathbf{z}$-parking
functions, i.e., positive integer sequences whose order statistics are weakly bounded by $\mathbf{z}$. In particular,
for $a=b=1$ we recover the formula for ordinary parking functions $P_n(1,\ldots, n)=(n+1)^{n-1}$.

\subsection{Abel polynomials}
\label{sec:abel-poly}
The Abel polynomial with parameter $a$, namely $A_n(x;a)=x(x-na)^{n-1}$, is the $n$-th basic polynomial  of the delta operator
$\dd = E_a D =D E_a$. When $a=-1$, $A_n(x;-1)=x(x+n)^{n-1}$. This polynomial counts the reluctant functions in the
binomial class $\mathcal{B}(T)$
where $T$ contains all possible rooted trees. In fact, such reluctant functions from $S$ to $X$ are represented as
rooted forests on vertex set $S \cup X$ with $X$ being the root set, whose number can be computed  by using Pr\"{u}fer codes.
We remark that the rooted forest corresponding to a reluctant function in $\mathcal{B}(T)$ is different from
the rooted forest $T_f$ in the final coimage of  $f$. The latter is a rooted
tree on $S$ only. The rooted forest corresponding to $f$ can be obtained from $T_f$ by
replacing every root $s_0$ of $T_f$ with an edge from $s_0$ to $f(s_0) \in X$, and letting every vertex of $X$ be a root.

Similarly, if $a=-k$ for a positive integer $k$, then $A_n(x; -k) =x(x+nk)^{n-1}$ enumerates the
reluctant functions in the binomial class $\mathcal{B}(T_k)$, where $T_k$ contains all the rooted $k$-trees,
which are rooted trees each of whose edge is colored by one of the colors $0, 1, \ldots, k-1$.
Such trees were studied in \cite{Stanley96, Yan97}.
In particular  the reluctant functions in $\mathcal{B}(T_k)$ can be represented as
sequences of rooted $k$-forests of length $x$,  which are defined in \cite{Yan97}  and  proved   to be  in bijection with
the  $(a,b)$-parking functions with $a=x$ and $b=k$.

By Proposition \ref{prop:shift_by_AP},  $t_n(x;E_{-k}D, \ZZc)$, the  generalized \gonc polynomial  associated
to $(E_{-k}D, \ZZc)$ is the same as $g_n(x; (z_i -ki)_{i\ge 0})$,
where $g_n(x; \ZZc)$ is the classical \gonc polynomial.

The polynomial $t_n(x; E_{-k}D, -\ZZc) = g_n(x; (-z_i-ki)_{i \ge 0})$, when evaluated at $0$,
has two combinatorial interpretations. On one hand, it
gives  the number of $\mathbf{u}$-parking functions with $\mathbf{u}=(u_i=ki+z_i)_{i \geq 0} $. On the other hand,
by Theorem \ref{Order-statistics} $t_n(0; E_{-k}D, -\ZZc)$  also counts the number of ways that
 $n$ cars form disjoint groups,  each group is equipped with a structure of rooted $k$-tree,  cars in the same group have the same
preference,  and the order statistics of the parking sequence are
bounded by $\ZZc$.

The $\dd$-Abel polynomial is of the form $t_n(x; E_{-k}D,  (a+bi)_{i \ge 0} ) = (x-a)(x-a-nb+nk)^{n-1}$.


\subsection{Laguerre  polynomials}
\label{sec:laguerre}
The $n$th Laguerre  polynomial $L_n(x)$ is given by the formula
\begin{eqnarray}
 L_n(x)= \sum_{k \geq 0} \frac{n!}{k!} \binom{n-1}{k-1} (-x)^k.
\end{eqnarray}
This is the $n$-th basic polynomial of the Laguerre delta operator $K:=D(D-I)^{-1} = -\sum_{i \ge 0} D^i$.

The coefficients $\frac{n!}{k!} \binom{n-1}{k-1}$ are called the (unsigned) Lah numbers,
which  are also the coefficients expressing rising factorials in terms of falling factorials.
See Example~\ref{sec:upper-fac}.

The polynomial $L_n(-x)$ enumerates the binomial class $\mathcal{B}(T_P)$, where $T_P$ is the set of all
rooted trees which is a path rooted at one of its leaves. To see this, consider all such reluctant functions
whose final preimage contains exactly $k$ rooted paths. To get such $k$
paths, we can linearly order all the elements of $S$ in a row and then cut it into $k$ nonempty segments, for
each segment let
the first element be the root. There are $n! \binom{n-1}{k-1}$ ways.
Since the paths are unordered, we divide $k!$ to get the number of sets of $k$ rooted paths.   Then multiplying
$x^k$ to get all functions from the set of $k$ paths  to $X$.

Let $t_n(x\fixed[0.15]{ \text{ }} ;K,  \ZZc)$ be the generalized \gonc polynomial associated to $(K, \ZZc)$. By Theorem \ref{thm:OS}
and its proof, we
get that the number of reluctant functions in  $\mathcal{B}(T_P)$ whose order statistics are bounded by $\ZZc$ is given by
$t_n(-x\fixed[0.15]{ \text{ }} ; K, -x+\ZZc)=t_n(0\fixed[0.15]{ \text{ }} ; K, \ZZc)$.  Equivalently, $t_n(0\fixed[0.15]{ \text{ }} ; K, \ZZc)$ counts the number of parking schemes
in which $n$ cars want to park in a parking lot of $x$ spaces such that (i)  the cars
arrive in disjoint groups,(ii) each group forms a queue,   (iii) all cars in the same queue prefer the same space,
and (iv) the order statistics of the preference sequence is bounded by $\ZZc$.

The $n$-th $\dd$-Abel polynomial associated with the operator $\dd = K$ and the arithmetic grid $\ZZc = (a+bi)_{i \ge 0}$ is given by
$$
 t_n(x\fixed[0.15]{ \text{ }} ; K,  (a+bi)_{i \ge 0} ) = (a-x) \sum_{k=1}^n \frac{n!}{k!} \binom{n-1}{k-1}(a+nb-x)^{k-1}.
$$
In particular, $t_n(0\fixed[0.15]{ \text{ }}; K,  (a+bi)_{i \ge 0} ) = a \sum_{k=1}^n  \frac{n!}{k!} \binom{n-1}{k-1}(a+nb)^{k-1}$, which,
for $a=b=1$, supplies the sequence
$1, 5, 46, 629, 11496, \ldots$,  namely A052873 in the On-Line Encyclopedia of Integer
Sequences (OEIS) \cite{OEIS}, where one can find the exponential generating function and an asymptotic formula.

\subsection{Inverse of the Abel polynomial $A_n(x;-1)$} \ \    \label{sec:inv_abel_polys}
Let $p_n(x) := \sum_{k\geq 0} \binom{n}{k} k^{n-k} x^k$.
Then $(p_n(x))_{n \ge 0}$ is a sequence of binomial type: This is actually the basic sequence of the delta operator
$\dd $ whose $D$-indicator is the compositional inverse of $f(t)=te^t$ (often referred to as the Lambert $w$-function),
and it is also the inverse sequence of the Abel polynomials $(A_n(x;-1))_{n \geq 0}$ under the umbral composition.

In fact, the sequence $(p_n(x))_{n \ge 0}$ enumerates the binomial class $\mathcal{B}(T_1)$, where $T_1$ contains all
the rooted trees with depth at most $1$, i.e., stars $\{S_k: \ k \geq 0 \}$ where
$S_k$ is the tree on vertices $\{v_0, \ldots, v_k\}$ with root $v_0$ and edges $\{ v_0, v_i\}$ for $i=1, \ldots, k$, \cite[Sec. 7]{MuRo70}.

By Theorem \ref{Order-statistics}, the generalized \gonc polynomial associated to $(\dd, \ZZc)$ gives a formula for
the number of parking schemes  such that the cars arrives in disjoint groups, each group has a leader,
all cars in the same group prefer the same space, and the order statistics of the preference sequence is bounded by $\ZZc$.


In addition, it follows from the above and Theorem \ref{thm:delta_Abel} that the $n$-th $\dd$-Abel polynomial associated wit
h the arithmetic grid $\ZZc = (a+bi)_{i \ge 0}$ is given by
$$
 t_n(x; \dd,  (a+bi)_{i \ge 0} ) = (x-a) \sum_{k=1}^n \binom{n}{k} k^{n-k} (x-a-nb)^{k-1}.
$$
In particular, $t_n(0; \dd, (-a-bi)_{i \ge 0})=a\sum_{k=1}^n  \binom{n}{k} k^{n-k} (a+nb)^{k-1}$;
for $a=b=1$, this yields the sequence $1, 5, 43, 549, 9341, \ldots$, which is  A162695 in OEIS, where one can
find the exponential generating function and an asymptotic formula.

\subsection{Exponential  polynomials}
\label{sec:expo_polys}

The $n$-th exponential polynomial, also called the \emph{Touchard polynomial} or the \emph{Bell polynomial},  is given by
$b_n(x)=\sum_{k=1}^n S(n,k) \fixed[0.15]{ \text{ }} x^k$,
where the coefficients $S(n,k)$ are the familiar Stirling numbers of the second kind,
see \cite[pp.~747--750]{RoKaOd73}.

Introduced by J. F. Steffensen in his 1927 treatise on interpolation \cite{Stef} and later reconsidered,
in particular, by J.~Touchard \cite{Touc} for their combinatorial and arithmetic properties, the exponential
polynomials are the basic polynomials of the delta  operator
\begin{equation}
\label{equ:touchard_operator}
\mathfrak{b} := \log(I + D) := \sum_{i \ge 1} (-1)^{i+1} \frac{1}{i} \iter{D}{i}.
\end{equation}
%
The exponential polynomials also enumerate a binomial class $\mathcal{B}(T)$ of reluctant functions from $S$ to $X$,
where we require that
elements in $S$ are totally ordered, i.e., there is an order such that $s_1 < \cdots < s_n$. Now let $T$ be
the family of
rooted paths labeled by $S$ such that the labels are monotone along the path, with the root having the largest label.
Correspondingly, the generalized \gonc polynomial $t_n(x\fixed[0.15]{ \text{ }} ; \mathfrak{b}, \ZZc)$ gives the
enumeration of parking schemes
in which $n$ cars arrive in disjoint groups,  all cars in the same group prefer the same space, and the order statistics of the
preference sequence are bounded by  $\ZZc$.
%

Thus, we get from the above and Theorem \ref{thm:delta_Abel} that the $n$-th $\dd$-Abel polynomial associated with the arithmetic grid $\ZZc = (a+bi)_{i \ge 0}$ is
$$
 t_n(x\fixed[0.15]{ \text{ }};  \mathfrak{b}, (a+bi)_{i \ge 0}) = (x-a) \sum_{k=1}^n S(n,k) (x-a-nb)^{k-1}.
$$
In particular,
$t_n(0\fixed[0.15]{ \text{ }}; \mathfrak{b}, (-a-bi)_{i \ge 0})=a\sum_{k=1}^n  S(n,k) (a+nb)^{k-1}$; for $a=b=1$, this gives the sequence $1, 4, 29, 311, 4447 \ldots$,  which, after a shift of index,  is  A030019 in OEIS.
The sequence also has other combinatorial interpretations,  for example, as  hypertrees
on $n$ labeled vertices \cite{OEIS}.   It would be interesting to find bijections between the parking sequences and
the hypertrees.

\vspace{.3cm}

The next two examples correspond to monomorphic classes associated to some binomial class $\mathcal{B}(T)$. As pointed out in
\cite{MuRo70},      if the reluctant functions in  $\mathcal{B}(T)$ are counted by $p_n(x)$ where
$$
 p_n(x) = \sum_{k=0}^n a_{k}x^k,
$$
then the basic sequence counting $\mathcal{B}_m(T)$ is
$$
 q_n(x)= \sum_{k=0}^n a_{k} x_{(k)}.
$$

\subsection{Lower factorial  polynomials} \label{sec:lower-fac}
The lower factorial $x_{(n)}=\prod_{i=0}^{n-1} (x-i)$ is the basic polynomial for the monomorphic class $\mathcal{B}_m(T_0)$, where
$T_0$ is as described in Example \ref{sec:standard}. The corresponding delta operator is the forward difference operator
$\Delta_{1,0}=E_1-E_0=E_1-I$. It counts the number of one-to-one functions from $S$ to $X$.

By Theorem  \ref{thm:OS}, the generalized \gonc polynomials $t_n(x)$ associated to $(\Delta_{1,0}, \ZZc)$ give the number of
one-to-one functions from $S$ to $X$ whose order statistics are bounded by $\ZZc$ via Eq.~\eqref{eq:order_sta}.
Assume $x \geq n$.
Note that any sequence $1\leq x_1< \cdots < x_n \leq x$ can be  represented geometrically as a strictly increasing lattice
path in the plane from $(0,0)$ to $(x-1, n)$ using only steps $E=(1,0)$ and $N=(0,1)$: one simply takes the $N$-steps from
$(x_i-1, i-1)$ to $(x_i-1, i)$, and connects the $N$-steps with $E$-steps, (see Figure~\ref{lattice-path}).

\begin{figure}[ht]
 \begin{center}
\begin{tikzpicture}[scale=0.5]
\draw [very thin] (0,0)--(8,0) (0,1)--(8,1) (0,2)--(8,2) (0,3)--(8,3) (0,4)--(8,4)
(0,0)--(0,4) (1,0)--(1,4) (2,0)--(2,4) (3,0)--(3,4) (4,0)--(4,4) (5,0)--(5,4) (6,0)--(6,4) (7,0)--(7,4) (8,0)--(8,4);
\draw [ultra thick,blue] (0,0)--(0,1)--(2,1)--(2,2)--(3,2)--(3,3)--(6,3)--(6,4)--(7,4);
\node at (1,0) {$\times$};  \node at (3,1) {$\times$};  \node at (5,2) {$\times$};  \node at (7,3) {$\times$};

\end{tikzpicture}
\caption{Lattice path corresponding to the sequence $(1, 3,4,7)$ with $x=8$. The stars indicate the right boundary
$(1, 3, 5, 7)$.  }  \label{lattice-path}
\end{center}
\end{figure}
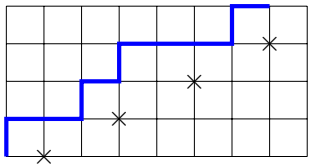
Clearly there are $n!$  one-to-one functions from $S$ to $X$ whose images are $\{x_1, \ldots, x_n\}$.
Thus Theorem \ref{Order-statistics}
implies that $\frac{1}{n!} t_n(0\fixed[0.15]{ \text{ }}; \Delta_{1,0}, - \ZZc)$ counts the number of strictly increasing lattice paths from
$(0,0)$ to $(x-1, n)$ with strict right boundary $(z_0, \ldots, z_{n-1})$, viz. with right boundary $(z_0, \ldots, z_{n-1})$ that never touch the points $\{(z_i, i): 0 \leq i < n\}$.

In particular, we have $t_n(x\fixed[0.15]{ \text{ }}; \Delta_{1,0}, (a+bi)_{i \ge 0} ) = (x-a) (x-a-nb-1)_{(n-1)}$, so the number of
 strictly increasing lattice paths from $(0,0)$ to $(x-1, n)$ with the affine right boundary $(a, a+b, a+2b, \ldots)$, where
 $a>0$ and $ b\geq 0$ are  integers,
 is given by
 $$
  \frac{1}{n!} a(a+nb-1)_{(n-1)} = \frac{a}{a+nb} \binom{a+nb}{n}.
 $$
When $a=b=1$, the above number is $\frac{1}{n+1} \binom{n+1}{n}=1$, since there is only one strictly increasing lattice path from
$(0,0)$ to $(x-1, n)$ that is bounded by $(1, \ldots, n)$.

\subsection{Upper factorial  polynomials} \label{sec:upper-fac}
The upper  factorial $x^{(n)}=(x+n-1)_{(n)}= \prod_{i=0}^{n-1} (x+i)$ is related to the Laguerre polynomials by the equation
$$
 x^{(n)}= \sum_{k \geq 0} \frac{n!}{k!} \binom{n-1}{k-1} x_{(k)}.
$$
Hence $x^{(n)}$  is the basic polynomial for the monomorphic
class $\mathcal{B}_m(T_p)$ where $T_p$ is as described in Example \ref{sec:laguerre}.
The corresponding delta operator for $(x^{(n)})_{n\geq 0}$ is the backward difference operator
$\Delta_{0,-1}=E_0-E_{-1} = I-E_{-1}$.

Monomorphic reluctant functions in $\mathcal{B}_m(T_p)$ can also be described  by lattice paths. Assume the final preimages of a
reluctant function in $\mathcal{B}_m(T_p)$ consists of $k$ paths of lengths $p_1, \ldots, p_k$ whose images are
$x_1 < \cdots < x_k$.  It corresponds to a lattice path from $(0,0)$ to $(x-1, n)$ whose consecutive vertical runs are
given by $p_1$ $N$-steps at $y=x_1-1$, followed by $p_2$ $N$-steps at $y=x_2-1$, and so on. See Figure~\ref{lattice-path2}
for an example.

\begin{figure}[ht]
 \begin{center}
\begin{tikzpicture}[scale=0.5]
\draw [very thin] (0,0)--(6,0) (0,1)--(6,1) (0,2)--(6,2) (0,3)--(6,3) (0,4)--(6,4) (0,5)--(6,5) (0,6)--(6,6) (0,7)--(6,7) (0,8)--(6,8)
(0,0)--(0,8) (1,0)--(1,8) (2,0)--(2,8) (3,0)--(3,8) (4,0)--(4,8) (5,0)--(5,8) (6,0)--(6,8);
\draw [ultra thick,blue] (0,0)--(0,3)--(1,3)--(1,5)--(3,5)--(3,7)--(5,7)--(5,8);
\node at (1,0) {$\times$};  \node at (1,1) {$\times$};  \node at (2,2) {$\times$};  \node at (2,3) {$\times$};
\node at (4,4) {$\times$}; \node at (4,5) {$\times$}; \node at (6,6) {$\times$}; \node at (6,7) {$\times$};
\end{tikzpicture}
\caption{Lattice path corresponding to the images  $(1,1,1,2,2,4,4,6)$ with $x=6$. The stars indicate the right boundary
$(1, 1,2,2,4,4,6,6)$.  }  \label{lattice-path2}
\end{center}
\end{figure}
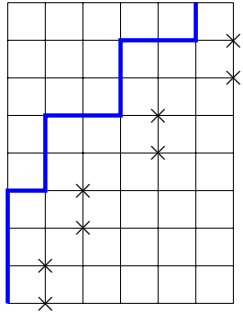

The  labels on each path  can be recorded by labeling the $N$-steps of the lattice path. Again there are $n!$ many labels possible for
each lattice path. Hence by Theorem \ref{thm:OS} we know that if
$t_n(x; \Delta_{0,-1}, \ZZc)$ is the generalized \gonc polynomial
 associated to the pair
$(\Delta_{0,-1}, \ZZc)$, then $\frac{1}{n!} t_n(0; \Delta_{0, -1}, -\ZZc)$ is  the number of lattice paths with the right boundary $\ZZc$,
a result first
established in \cite{KSY}.

In particular, $t_n(x; \Delta_{0,-1}, (a+bi)_{i \ge 0}) = (x-a) (x-a-nb+1)^{(n-1)}$, hence the number of
  lattice paths from $(0,0)$ to $(x-1, n)$ with strict affine right boundary $(a, a+b, a+2b, \ldots)$ for some  integers
 $a>0$ and $ b\ge 0$ is
 \begin{eqnarray} \label{for:catalan}
 \frac{1}{n!}  a(a+nb+1)^{(n-1)} = \frac{a}{a+n(b+1)} \binom{a+n(b+1)}{n},
 \end{eqnarray}
 a well-known result, see e.g. \cite[p. 9]{Mohanty}.
In particular,  for $a=1$ and $b=k$ for some positive integer $k$, it counts the number of lattice paths from the origin to $(kn, n)$ that never pass below
the line $x=yk$. In this case, \eqref{for:catalan} gives
$\frac{1}{1+(k+1)n} \binom{1+(k+1)n}{n}= \frac{1}{1+kn}\binom{(k+1)n}{n}$,
which is the $n$-th $k$-Fuss-Catalan number.
\section{Further remarks}
\label{sec:remarks}
In the theory of binomial enumeration, it is not really necessary to restrict oneself to reluctant functions, and we can in fact consider a more general
setting, as outlined in \cite[Sec. 2]{Ray88}. Following Joyal \cite{Joyal}, a \emph{species} $B$ is a covariant endofunctor on the
category of finite sets and bijections. Given a finite set $E$, an element $s \in B(E)$ is called a $B$-structure on $E$. A
\emph{$k$-assembly of $B$-structures on $E$} is then a partition $\pi$ of the set $E$ into $k$ blocks such that each block of $\pi$ is
endowed with a $B$-structure. Let $B_k(E)$ denote the set of all such $k$-assemblies.
For example, when $B$ is a set of rooted trees, a $k$-assembly of $B$-structures on $E$ is a $k$-forest of rooted trees with vertex set $E$.
But we can also take $B$ to be other structures, such as permutations, graphs, posets, etc.

To enumerate the number of assemblies of $B$-structures, we define sequences of nonnegative integers by
$$
 b_{n,k} = \left\{
 \begin{array}{ll}
   |B_k([n])|,  &  k \leq n, \\
   0, & k >n.
 \end{array}
\right.
$$
Mullin and Rota's work \cite{MuRo70} establishes that if $b_n(x)=\sum b_{n,k} x^k$ is the enumerator for assemblies of $B$-structures
on $[n]$, then $(b_n(x))_{n \geq 0}$ is a polynomial sequence of $\KK[x]$ of binomial type. Now we can interpret
the factor $x^k$ in $b_n(x)$ by considering all functions
(or monomorphic functions if one replaces $x^k$ with $x_{(k)}$) from the blocks of a $k$-assembly to a set $X$ of size $x$.  When $X$ is totally ordered, i.e., $X$
is isomorphic to the poset $[s] $ with numerical order, where $s=|X|$, we can consider all such functions
whose order statistics are bounded
by a given sequence. The enumeration for such $k$-assemblies with an order-statistics constraint is captured by the
associated generalized \gonc polynomials.

In principle, the above combinatorial description only applies to  polynomial sequences $(p_n(x))_{n \ge 0}$ of
binomial type with nonnegative integer coefficients. Mullin and Rota hinted at a generalization of their theory to
include polynomials with negative coefficients, and Ray \cite{Ray88} developed a  concept of  weight  functions  on  the  partition
category  which  allows  one  to  realize  any
binomial sequence, over any commutative ring with identity, as the enumerator of weights. It would be interesting
to investigate the role of generalized \gonc polynomials in such weighted counting, as well as in
other dissecting schemes as described in Henle \cite{Henle75},
and to find connections to rook polynomials, order invariants, Tutte invariants of combinatorial geometries, and symmetric functions.

\section*{Acknowledgments}
This publication was made possible by NPRP grant No. [5-101-1-025] from the Qatar National
Research Fund (a member of Qatar Foundation). The statements made herein are
solely the responsibility of the authors.

We thank Professor Graham for his continual support and encouragement throughout our research.

\end{document}